\numberwithin{equation}{section}
\DeclareMathOperator{\rank}{rank}
\DeclareMathOperator{\tr}{trace}
\theoremstyle{plain}
\newtheorem{theorem}{Theorem}[section]
\newtheorem{lemma}[theorem]{Lemma}
\newtheorem{corollary}[theorem]{Corollary}
\newtheorem{proposition}[theorem]{Proposition}
\theoremstyle{definition}
\newtheorem{definition}[theorem]{Definition}
\theoremstyle{remark}
\newtheorem{remark}[theorem]{Remark}
\title{Time\,-Varying Semidefinite Programming:\\
Path Following a Burer--Monteiro Factorization}
\author{Antonio Bellon\thanks{Faculty of Electrical Engineering, Czech Technical University in Prague, Karlovo Namesti 13, Prague 121 35, the Czech Republic}
\and Mareike Dressler\thanks{School of Mathematics and Statistics, University of New South Wales, Sydney, NSW 2052, Australia}
\and Vyacheslav Kungurtsev${}^*$
\and Jakub Mareček${}^*$
\and André Uschmajew\thanks{Institute of Mathematics \& Centre for Advanced Analytics and Predictive Sciences, University of Augsburg, 86159 Augsburg, Germany}
}
\date{}
\begin{document}

\maketitle

\begin{abstract}
  We present an online algorithm for time-varying semidefinite programs (TV-SDPs), based on the tracking of the solution trajectory of a low-rank matrix factorization, also known as the Burer--Monteiro factorization, in a path-following procedure. There, a predictor-corrector algorithm solves a sequence of linearized systems. This requires the introduction of a horizontal space constraint to ensure the local injectivity of the low-rank factorization. The method produces a sequence of approximate solutions for the original TV-SDP problem, for which we show that they stay close to the optimal solution path if properly initialized. Numerical experiments for a time-varying max-cut SDP relaxation demonstrate the computational advantages of the proposed method for tracking TV-SDPs in terms of runtime compared to off-the-shelf interior point methods.
\end{abstract}

\begin{keywords}
Semidefinite programming; nonlinear programming; parametric optimization; \\ time-varying constrained optimization; Newton type methods
\end{keywords}

\begin{MSCcodes}
49M15, 90C22, 90C30, 90C31
\end{MSCcodes}

\section{Introduction}
\label{sec: introduction}

Semidefinite programs (SDPs) constitute an important class of convex constrained optimization problems that is ubiquitous in statistics, signal processing, control systems, and other areas.
In several applications, the data of the problem vary over time, so that this can be modeled as a \emph{time-varying SDP} (TV-SDP).
In this paper we consider TV-SDPs of the form 
\begin{equation}\label{eq: SDP}
    \tag{SDP\textsubscript{$t$}}
    \begin{aligned}
        &\min_{X \in\mathbb{S}^n} &&\langle C_t,X\rangle\\
        &\text{\ \ s.t.}&&\mathcal{A}_t(X)=b_t,
       \\
        &&&X\succeq 0,
    \end{aligned}
\end{equation}
where $t\in[0,T]$ is a time parameter varying on a bounded interval. 
Here $\mathbb{S}^n$ denotes the space of real symmetric $n \times n$ matrices, $\mathcal{A}_t\colon \mathbb{S}^n \to \mathbb R^m$ is a linear operator defined by $\mathcal{A}_t(X)=(\langle A_{1,t},X\rangle,\dots,\langle A_{m,t},X\rangle)$ for some $A_{1,t},\dots,A_{m,t}\in\mathbb{S}^n$, $b_t\in\mathbb{R}^m$, and $C_t\in\mathbb{S}^n$. 
Throughout the paper $\langle \cdot,\cdot \rangle$ denotes the Frobenius inner product and the constraint $X\succeq0$ requires $X\in\mathbb{S}^n$ to be positive semidefinite. 
In this time-varying setting, one looks for a solution curve $t \mapsto X_t$ in $\mathbb S^n$ such that $X = X_t$ is an optimal solution for~\eqref{eq: SDP} at each time point $t \in [0,T]$.

Time-dependent problems leading to TV-SDPs occur in various applications, such as optimal power flow problems in power systems \cite{Lavaei2011}, state estimation problems in quantum systems \cite{Aaronson2019}, modeling of energy economic problems~\cite{Dantzig1977}, job-shop scheduling problems~\cite{AndersonPhD}, as well as problems arising in signal processing, queueing theory~\cite{Nazarathy2009}, or aircraft engineering~\cite{Teren1977}. 
TV-SDPs can be seen as a generalization of continuous linear programming problems, which were first studied by Bellman~\cite{Bellman1953} in relation to so-called bottleneck problems in multistage linear production economic processes. Since then, a large body of literature has been devoted to studying continuous linear programs with and without additional assumptions.
However, the generalization of this idea to other classes of optimization problems has only recently been considered. 
In~\cite{Wang2009}, Wang, Zhang, and Yao~study continuous conic programs, and finally Ahmadi and Khadir~\cite{Ahmadi2021} consider time-varying SDPs. In contrast to our setting, they require the data to vary polynomially with time and also restrict themselves to polynomial solutions. Moreover, the problems studied there involve kernel terms and more complicated constraints, while our work addresses TV-SDPs in a simpler sense of univariate parametric SDPs, following the literature thread of \cite{Goldfarb1999,Hauenstein2022}.

A naive approach to solve the time-varying problem~\eqref{eq: SDP} is to consider, at a sequence of times $\{t_k\}_{k\in\{1,\dots,K\}}\subseteq[0,T]$, the instances of the problem (SDP$_{t_k}$) for $k\in\{1,\dots,K\}$ and solve them one after another. 
The best solvers for SDPs are interior point methods \cite{jarre1993interior,Tuncel2000,Andersen2003, Freund2006,Jiang2020}, which can solve them in a time that is polynomial in the input size. However, these solvers do not scale particularly well, and thus this brute-force approach may fail in applications where the volume and velocity of the data are large. 
Furthermore, such a straightforward method would not make use of the local information collected by solving the previous instances of the problem. 
Even if one considers warm starts~\cite{gondzio2002reoptimization,Gondzio2008,Engau2010,Colombo2011,Skajaa2013}, the reduction in run time is likely to be marginal. 
For instance,~\cite[sections 5.5 and 5.6]{Skajaa2013} reports a 30--60\% reduction of the runtime on a collection of time-varying instances of their own choice. 

Instead, in this work, we would like to utilize the idea of so-called path-following predictor-corrector algorithms as developed in~\cite{Guddat1990,Allgower1990}. 
In classical predictor-corrector methods, a predictor step for approximating the directional derivative of the solution with respect to a small change in the time parameter is applied, together with a correction step that moves from the current approximate solution closer to the next solution at the new time point. 
The latter is based on a Newton step for solving the first-order optimality KKT conditions.

A limiting factor in solving both stationary and time-dependent SDPs is computational complexity when $n$ is large. 
A common solution to this obstacle is the Burer--Monteiro approach, as presented in the seminal work~\cite{Burer2003,Burer2005}. 
In this approach, a low-rank factorization $X = Y Y^T$ of the solution is assumed with $Y \in \mathbb R^{n \times r}$ and $r$ potentially much smaller than $n$. 
In the optimization literature, the Burer--Monteiro method has been very well studied as a nonconvex optimization problem, e.g., in terms of algorithms~{\cite{Journee2010}}, quality of the optimal value~\cite{Barvinok1995,Pataki1998}, and (global) recovery guarantees~\cite{Boumal2016, Boumal2020,Cifuentes2021}.

In a time-varying setting, the Burer--Monteiro factorization leads to 
\begin{equation}
\label{eq: BM}
\tag{BM\textsubscript{$t$}}
    \begin{aligned}
        &\min_{Y\in\mathbb{R}^{n\times r}} &&\langle C_t,YY^T\rangle
        \\
        &\text{\ \ \ s.t.}&&\mathcal{A}_t(YY^T)=b_t,
    \end{aligned}
\end{equation}
which for every fixed $t$ is a quadratically constrained quadratic problem. 
A solution then is a curve $t \mapsto Y_t$ in $\mathbb R^{n \times r}$, which, depending on $r$, is a space of much smaller dimension than $\mathbb S^n$. 
However, this comes at the price that the problem~\eqref{eq: BM} is now nonconvex. 
Moreover, theoretically it may happen that local optimization methods converge to a critical point that is not globally optimal~\cite{Waldspurger2020}, although in practice the method usually shows very good performance~\cite{Burer2003,Journee2010,Rosen2020}.

The aim of this work is to combine the Burer--Monteiro factorization with path-following predictor-corrector methods and to develop a practical algorithm for approximating the solution of~\eqref{eq: BM}, and consequently of~\eqref{eq: SDP}, over time. 
As we explain in section~\ref{sec: quotient_geometry}, to apply such methods, we need to address the issue that the solutions of~\eqref{eq: BM} are never isolated, due to the nonuniqueness of the Burer--Monteiro factorization caused by orthogonal invariance. 
In this paper, we apply a well-known technique to handle this problem by restricting the solutions to a so-called \textit{horizontal space} at every time step. 
From a geometric perspective, such an approach exploits the fact that equivalent factorizations can be identified as the same element in the corresponding quotient manifold with respect to the orthogonal group action~\cite{Massart2020}. 

The paper is structured as follows. In section~\ref{sec: main_assumptions} we review important foundations from~the SDP literature and state the main assumptions we make on the TV-SDP problem~\eqref{eq: SDP}.  
Section~\ref{sec: quotient_geometry} presents the underlying quotient geometry of positive semidefinite rank-$r$ matrices from a linear algebra perspective, focusing in particular on the notion of horizontal space and the domain of injectivity of the map $Y\mapsto YY^T$. We then describe in section~\ref{sec: predictor_corrector} our path-following predictor-corrector algorithm, which is based on iteratively solving the linearized KKT system for~\eqref{eq: BM} over time. 
A main result is the rigorous error analysis for this algorithm presented in subsection~\ref{subsec: err_analysis}. 
In section~\ref{sec: num_exp}, we showcase numerical results that test our method on a time-varying variant of the well-known Goemans--Williamson SDP relaxation for the Max-Cut problem in combinatorial optimization and graph theory. 
We conclude in section~\ref{sec: conclusion} with a brief discussion of our results.

\section{Preliminaries and key assumptions}
\label{sec: main_assumptions}

Naturally, the rigorous formulation of path-following algorithms requires regularity assumptions on the solution curve. 
In our context, this will require both assumptions on the original TV-SDP problem~\eqref{eq: SDP} as well as on its reformulation~\eqref{eq: BM}.
In particular for the latter, the correct choice of the dimension $r$ is crucial.
In what follows, we present and discuss these assumptions in~detail. 

First, we briefly review some standard notions and properties for primal-dual SDP pairs; see~\cite{Alizadeh1997,handbookSDP}. Consider the \textit{conic dual problem} of~\eqref{eq: SDP}:
\begin{equation}
\label{eq: DSDP}
\tag{D-SDP\textsubscript{$t$}}
    \begin{aligned}
        &\max_{w\in\mathbb{R}^m} &&\langle b_t, w\rangle\\
        &\text{\ \ s.t.}&&Z(w)\coloneqq C_t-\mathcal A^*_t(w)\succeq0
    \end{aligned}
\end{equation}
where $\mathcal A_t^*\colon w \mapsto \sum_{i=1}^m w_iA_{i,t}$ is the linear operator adjoint to $\mathcal A_t$.
For convenience, we often drop the explicit dependence on $w$ and refer to a solution of~\eqref{eq: DSDP} simply as $Z$.
While reviewing the basic properties of SDPs, we assume the time parameter to be fixed and hence omit the subindex~$t$.

The KKT conditions for the pair of primal-dual convex problems~\eqref{eq: SDP}-\eqref{eq: DSDP} read
\begin{equation}
    \label{eq: KKT_SDP}
    \begin{aligned}
        \mathcal A(X)&=b, 
        &&X\succeq0,
        \\
        Z +\mathcal A^*(w)&=C,
        &&Z\succeq0,\\
        XZ&=0.
    \end{aligned}
\end{equation}
These are sufficient conditions for the optimality of the pair $(X,Z)$.
\begin{definition}[strict feasibility]
\label{def: strict_feas}
We say that \textit{strict feasibility} holds for an instance of primal SDP if there exists a positive definite matrix $X\succ0$ that satisfies $\mathcal{A}(X) = b$. Similarly, strict feasibility holds for the dual if there exist a vector $w\in\mathbb{R}^m$ satisfying $Z(w)\succ 0$.
 
\end{definition}
It is well-known that under strict feasibility the KKT conditions are also necessary for optimality.
Note that, in general, a pair $(X,Z)$ of optimal solutions satisfies the inclusions $\operatorname{im}X\subseteq\operatorname{ker}Z$ and $\operatorname{im}Z\subseteq\operatorname{ker}X$, where $``\operatorname{im}"$ and $``\operatorname{ker}"$ denote the image and kernel, respectively. 

\begin{definition}[strict complementarity]
\label{def: strict_compl}
A primal-dual optimal point $(X,Z)$ is said to be \textit{strictly complementary} if $\operatorname{im}X=\operatorname{ker}Z$ (or, equivalently, $\operatorname{im}Z=\operatorname{ker}X$). 
A primal-dual pair of an instance of SDP satisfies \textit{strict complementarity} if there exists a strictly complementary primal-dual optimal point $(X,Z)$. 
\end{definition}

\begin{definition}[nondegeneracy]
\label{def: non_deg}
A primal feasible point $X$ is \textit{primal nondegenerate} if
\begin{equation}
\label{eq: primal_non_deg}
    \ker\mathcal{A}+\mathcal{T}_{X}=\mathbb{S}^{n},
\end{equation}
with $\mathcal{T}_{X}$ being the tangent space to the manifold $\mathcal M_r$ of fixed rank-$r$ symmetric matrices at $X$, where $r = \rank X$.
Let $X = YY^T$ be a rank-revealing decomposition, then 
\[
    \mathcal{T}_{X} = \{ Y V^T + V Y^T \colon V \in \mathbb R^{n \times r} \}.
\]
A dual feasible point $Z$ is \textit{dual nondegenerate} if
\begin{equation*}
    \operatorname{im} \mathcal A^*+\mathcal{T}_{Z}=\mathbb{S}^{n},
\end{equation*}
where $\mathcal{T}_{Z}$ is the tangent space at $Z$ to the manifold of fixed rank-$s$ symmetric matrices with $s = \rank Z$.
\end{definition}

Primal-dual strict feasibility implies the existence of both a primal and a dual optimal solution with a zero duality gap.
In addition, primal (dual) nondegeneracy implies dual (primal) uniqueness of the solutions. 
Under strict complementarity, the converse is also true, that is, primal (dual) uniqueness of the primal dual optimal solutions implies dual (primal) nondegeneracy of these solutions.
Moreover, primal-dual nondegeneracy and strict complementarity hold generically. We refer to~\cite{Alizadeh1997} for details. 

With regard to the time-varying case, these facts can be generalized as follows.

\begin{theorem}{\normalfont(Bellon et al., \cite[Theorem 2.19]{Bellon2021})}
\label{thm:single_valued_differentiable}
Let {\upshape (P\textsubscript{$t$},D\textsubscript{$t$})} be a primal-dual pair of TV-SDPs parametrized over a time interval $[0,T]$ such that primal-dual strict feasibility holds for any $t\in [0,T]$ and assume that the data $\mathcal{A}_t,b_t,C_t$ are continuously differentiable functions of $t$.
Let $t^* \in [0,T]$ be a fixed value of the time parameter and suppose that $(X^*, Z^*)$ is a nondegenerate optimal and strictly complementary point for {\upshape (P\textsubscript{$t^*$},D\textsubscript{$t^*$})}.
Then there exists $\varepsilon > 0$ and a continuously differentiable unique mapping $t\mapsto(X_t, Z_t)$ defined on $(t^*-\varepsilon,t^*+\varepsilon)$ such that $(X_t, Z_t)$ is a unique and strictly complementary primal-dual optimal point to {\upshape (P\textsubscript{$t$},D\textsubscript{$t$})} for all $t\in(t^*-\varepsilon,t^*+\varepsilon)$.
In particular, the ranks of $X_t$ and $Z_t$ are constant for all $t\in(t^*-\varepsilon,t^*+\varepsilon)$.
\end{theorem}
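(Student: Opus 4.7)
The strategy is to apply the Implicit Function Theorem (IFT) to a suitable smooth reformulation of the KKT system~\eqref{eq: KKT_SDP} parametrized by $t$. Because strict feasibility holds throughout $[0,T]$, the KKT conditions are both necessary and sufficient for optimality, so it suffices to produce a $C^1$ curve of KKT triples $(X_t, Z_t, w_t)$ passing through $(X^*, Z^*, w^*)$ at $t = t^*$, and then argue uniqueness, strict complementarity, and constancy of ranks on a neighborhood.

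First, I would exploit strict complementarity to reduce to a fixed-rank setting. Set $r = \operatorname{rank} X^*$ and $s = \operatorname{rank} Z^*$; strict complementarity gives $\operatorname{im} X^* = \ker Z^*$ and hence $r + s = n$. Using rank-revealing factorizations $X = YY^T$ with $Y \in \mathbb R^{n\times r}$ of full column rank, and analogously $Z = WW^T$ with $W \in \mathbb R^{n\times s}$, one obtains local smooth parametrizations of the manifolds $\mathcal M_r$ and $\mathcal M_s$ near $X^*$ and $Z^*$. Since the factorizations are only unique up to orthogonal transformations, I would fix a horizontal-space representative as in section~\ref{sec: quotient_geometry} to eliminate the gauge ambiguity. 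Substituting these parametrizations into~\eqref{eq: KKT_SDP} yields a square $C^1$ system $F(Y, W, w; t) = 0$ whose zeros locally correspond bijectively to primal-dual KKT pairs of constant ranks $(r,s)$.

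The main obstacle is showing that the Jacobian $\partial F/\partial(Y, W, w)$ evaluated at $(Y^*, W^*, w^*; t^*)$ is nonsingular. Here is where both hypotheses pay off. The linearization of the complementarity equation $XZ = 0$ at a strictly complementary $(X^*,Z^*)$ splits the admissible perturbations so that $\delta X \in \mathcal T_{X^*}$ and $\delta Z \in \mathcal T_{Z^*}$ are coupled only through orthogonality-type conditions; the complementarity of the subspaces $\mathcal T_{X^*}$ and $\mathcal T_{Z^*}$ (a consequence of $r + s = n$ together with strict complementarity) makes this coupling well-posed. Primal non-degeneracy~\eqref{eq: primal_non_deg} then ensures that the linearization of $\mathcal A_t(X) = b_t$ restricted to $\mathcal T_{X^*}$ has the right rank, and dual non-degeneracy plays the symmetric role for the dual feasibility equation $\mathcal A_t^*(w) + Z = C_t$. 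Combining these, a dimension count gives invertibility of the full Jacobian; this step essentially parallels the static equivalence between non-degeneracy and uniqueness established in~\cite{Alizadeh1997}.

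Once invertibility is established, the IFT provides $\varepsilon > 0$ and a unique $C^1$ mapping $t \mapsto (Y_t, W_t, w_t)$ on $(t^*-\varepsilon, t^*+\varepsilon)$ solving $F = 0$, and hence a unique $C^1$ curve $t \mapsto (X_t, Z_t) = (Y_tY_t^T, W_tW_t^T)$ of primal-dual optimal pairs. Constancy of the ranks is automatic because $Y_t$ and $W_t$ retain full column rank by continuity (possibly after shrinking $\varepsilon$), and strict complementarity persists because the smallest positive eigenvalues of $X_t$ and $Z_t$ depend continuously on $t$ while $X_t Z_t = 0$. Uniqueness among \emph{all} primal-dual optimal pairs (not just among low-rank ones) follows by invoking that non-degeneracy and strict complementarity propagate to a neighborhood of $t^*$, and that either of these properties implies uniqueness of the opposite problem's optimum.
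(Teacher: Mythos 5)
The paper does not prove this statement; Theorem~\ref{thm:single_valued_differentiable} is quoted verbatim from Bellon et al.~\cite[Theorem 2.19]{Bellon2021} and used as an external ingredient, so there is no in-paper proof to compare against. Evaluated on its own, your plan correctly identifies the standard strategy (apply the Implicit Function Theorem to a gauge-fixed, fixed-rank reformulation of the KKT system), and most of the surrounding steps are sound: the dimension count for the system $F(Y,W,w;t)=0$ does come out square when $r+s=n$, the persistence of rank and of strict complementarity under small perturbations is correctly argued, and the ``uniqueness among all optimal pairs'' follow-up is handled correctly by invoking openness of the non-degeneracy conditions.

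The genuine gap is in the Jacobian-invertibility step. You write that ``a dimension count gives invertibility of the full Jacobian,'' but a dimension count only establishes that the Jacobian is a square matrix --- it says nothing about non-singularity, which is the entire content of the step. Likewise, the claim that ``the complementarity of the subspaces $\mathcal T_{X^*}$ and $\mathcal T_{Z^*}$ \ldots makes this coupling well-posed'' is not quite right: when $r+s=n$ those two tangent spaces are transversal but \emph{not} complementary; they intersect in a subspace of dimension $rs$ (the off-diagonal block in a basis simultaneously reducing $X^*$ and $Z^*$). What is actually needed is an argument that the homogeneous linearized system has only the trivial solution, and this relies crucially on a \emph{sign} condition, not just a dimension count. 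Concretely, working in a block basis where $X^*=\mathrm{diag}(D_1,0)$, $Z^*=\mathrm{diag}(0,D_2)$ with $D_1,D_2\succ0$, a kernel element has
$\delta X = \bigl(\begin{smallmatrix} A & B\\ B^T & 0\end{smallmatrix}\bigr)\in\mathcal T_{X^*}$,
$\delta Z = \bigl(\begin{smallmatrix} 0 & B'\\ (B')^T & D\end{smallmatrix}\bigr)\in\mathcal T_{Z^*}$,
with $\mathcal A(\delta X)=0$, $\delta Z=-\mathcal A^*(\delta w)$, and the linearized complementarity $BD_2+D_1B'=0$. The first two give $\langle\delta X,\delta Z\rangle = -\langle\mathcal A(\delta X),\delta w\rangle = 0$; the third gives $\langle\delta X,\delta Z\rangle = 2\langle B,B'\rangle = -2\|D_1^{-1/2}BD_2^{1/2}\|_F^2$. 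Combining forces $B=B'=0$, after which $\delta X\in\ker\mathcal A\cap\mathcal T_{Z^*}^\perp=\{0\}$ (by dual non-degeneracy) and $\delta Z\in\operatorname{im}\mathcal A^*\cap\mathcal T_{X^*}^\perp=\{0\}$ (by primal non-degeneracy), and $\delta w=0$ by injectivity of $\mathcal A^*$. This pairing argument, exploiting strict complementarity through the positive-definiteness of $D_1,D_2$, is the piece your proposal omits; without it, the ``invertibility'' you assert is unproved.
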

The last statement of the theorem directly follows from the fact that a change in the rank of either $X_t$ or $Z_t$ implies a loss of strict complementarity because of the lower-semicontinuity of the rank.
Based on these facts, for the initial problem~\eqref{eq: SDP} we make the following assumptions.
\begin{enumerate}[label={(A\arabic*)}]
    \item\label{ass: strict_feas}
    \eqref{eq: SDP} and \eqref{eq: DSDP} are strictly feasible for any $t\in[0,T]$.
    \item\label{ass: licq}
    The linear operator $\mathcal{A}_t$ is surjective in any $t\in[0,T]$.
    \item\label{ass: non_deg}
    \eqref{eq: SDP} has a primal nondegenerate solution $X_t$ and\\~\eqref{eq: DSDP} has a dual nondegenerate solution $Z_t$ at any $t\in[0,T]$.
    \item\label{ass: strict_compl}
    The solution pair $(X_t,Z_t)$ is strictly complementary for any $t\in[0,T]$.
    \item\label{ass: cont_diff}
    Data $\mathcal{A}_t,b_t,C_t$ are continuously differentiable functions of $t$.
\end{enumerate}
\setlist[enumerate,1]{start=1}
Assumptions~\ref{ass: strict_feas} and~\ref{ass: licq} are standard for SDPs and in linearly constrained optimization in general, while assumptions \ref{ass: non_deg}--\ref{ass: strict_compl} rule out many ``pathological'' cases \cite{Bellon2021}. 
In particular, assumption~\ref{ass: non_deg} implies that the solution pair $(X_t,Z_t)$ is unique. By Theorem~\ref{thm:single_valued_differentiable}, assumptions \ref{ass: non_deg},~\ref{ass: strict_compl}, and~\ref{ass: cont_diff} have the following consequences:
\begin{enumerate}[label={(C\arabic*)}] 
    \item\label{cons: unique}
    \eqref{eq: SDP} has a unique and smooth solution curve $X_t$, $t \in [0,T]$.
    \item\label{cons: rank}
    The curve $t\mapsto X_t$ is of constant rank $r^*$.
\end{enumerate} 
\mbox{}
\newline
For setting up the factorized version~\eqref{eq: BM} of~\eqref{eq: SDP}, it is necessary to choose the dimension $r$ of the factor matrix $Y$ in~\eqref{eq: BM}, ideally equal to $r^*$ of~\ref{cons: rank}.  
In what follows, we assume that we know the constant rank $r^*$.
Given access to an initial solution $X_0$ at time $t=0$, it is possible to compute~$r^*$, so this assumption is without further loss of generality.  

It is worth noting that the rank cannot be arbitrary.
Based on a known result of Barvinok and Pataki~\cite{Barvinok2001}, for any SDP defined by $m$ linearly independent constraints, there always exists a solution of rank $r$ such that $\frac{1}{2}r(r+1)\le m$.
Since we assume that $X_t$ is the unique solution to~\eqref{eq: SDP} with constant rank $r^*$ we conclude that 
\[
    \frac{1}{2}r^*(r^*+1)\le m.
\]
We point out that recently the Barvinok--Pataki bound has been slightly improved~\cite{Im2021}.
 
\section{Quotient geometry of positive semidefinite rank-$\bm{r}$ matrices} 
\label{sec: quotient_geometry}

We now investigate the factorized formulation~\eqref{eq: BM} in more detail. As already mentioned, in contrast to the original problem~\eqref{eq: SDP}, this is a nonlinear problem (specifically, a quadratically constrained quadratic problem) which is nonconvex.
Moreover, the property of uniqueness of a solution, which is guaranteed by~\ref{cons: unique} for the original problem~\eqref{eq: SDP}, is lost in~\eqref{eq: BM}, because its representation via the map
\[
    \phi : \mathbb R^{n \times r} \to \mathbb S^n, \quad \phi(Y) = Y Y^T
\]
is not unique.
In fact, this map is invariant under the orthogonal group action
\[
    \mathcal{O}_r\times \mathbb{R}^{n\times r}\to \mathbb{R}^{n\times r}, \quad (Q,Y)\mapsto YQ,
\]
on $\mathbb R^{n \times r}$, where
\[
    \mathcal{O}_r:=\{Q\in\mathbb{R}^{r\times r}\ :\ QQ^T=I_r\}, 
\]
with $I_r$ denoting the $r\times r$  identity matrix, is the orthogonal group. Hence both the objective function $Y\mapsto\langle C_t,YY^T\rangle$ and the constraints $\mathcal{A}_t(YY^T)=b_t$ in~\eqref{eq: BM} are invariant under the same action.
As a consequence, the solutions of~\eqref{eq: BM} are never isolated \cite{Journee2010}.
This poses a technical obstacle to the use of path-following algorithms, as the path needs to be, at least locally, uniquely defined.

On the other hand, by assuming that the correct rank $r = r^*$ of a unique solution $X_t$ for \eqref{eq: SDP} has been chosen for the factorization, any solution $Y_t$ for \eqref{eq: BM} must satisfy $Y^{}_t Y_t^T = X_t$. From this it follows that any solution is of the form $Y_t Q$ with $Q \in \mathcal O_r$; see, e.g.,~\cite[Lemma~2.1]{Burer2005}. In other words, the action of the orthogonal group is indeed the only source of nonuniqueness.
This corresponds to the well-known fact that the set of positive definite fixed rank-$r$ symmetric matrices, which we denote by $\mathcal M_r^+$, is a smooth manifold  that can be identified with the quotient manifold $\mathbb R_*^{n \times r} / \mathcal O_r$, where $\mathbb R_*^{n \times r}$ is the open set of $n \times r$ matrices with full column rank.

In the following, we describe how the nonuniqueness can be removed by introducing a so-called horizontal space, which is a standard concept in optimization on quotient manifolds, see,~e.g.,~\cite[section~3.5.8]{Absil2008}. For positive semidefinite fixed-rank matrices, this has been worked out in detail in~\cite{Massart2020}.
Additional material, including the complex Hermitian case, can be found in~\cite{Balan2021}.
However, in order to arrive at practical formulas that are useful for our path-following algorithm later on, we will not further refer to the concept of a quotient manifold but directly focus on the injectivity of the map $\phi$ on suitable linear subspaces of $\mathbb R^{n \times r}$, which we describe in the following section. Such a simplification takes into account that we are dealing with a quotient manifold $\mathbb R_*^{n \times r} / \mathcal O_r$ with $\mathbb R_*^{n \times r}$ being just an open subset of $\mathbb R^{n \times r}$. Then the horizontal space at a point $Y$ should be a subspace of the tangent space of $\mathbb R_*^{n \times r}$ at $Y$, which, however, is just $\mathbb R^{n \times r}$.

\subsection{Horizontal space and unique factorizations}
\label{subsec: hor_space}

Given $Y \in \mathbb R^{n \times r}_*$, we denote the corresponding orbit under the orthogonal group as
\[
    Y\mathcal{O}_r:=\{Y Q \, :\, Q \in \mathcal O_r\} \subseteq\mathbb{R}^{n\times r}_*.
\]
The orbit $Y \mathcal{O}_r$ is an embedded submanifold of $\mathbb{R}^{n\times r}_*$ of dimension $\tfrac{1}{2}r(r-1)$ with two connected components, according to $\operatorname{det}Q=\pm1$. 
Its tangent space at $Y$, which we denote by $\mathcal T_{Y}$, is easily derived by noting that the tangent space to the orthogonal group $\mathcal O_r$ at the identity matrix equals the space of real skew-symmetric matrices $\mathbb{S}^r_{skew}$ (see, e.g.,~\cite[Example~3.5.3]{Absil2008}).
Therefore,
\begin{equation*}
\label{eq: orbit tangent space}
    \mathcal T_{Y} =\{Y S\, :\, S\in \mathbb{S}^r_{skew}\}.
\end{equation*}
Since the map $\phi(Y) = Y Y^T$ is constant on $Y \mathcal{O}_r$, its derivative
\[
     Y \mapsto \phi'(Y)[H] = Y H^T + H Y^T
\]
vanishes on $\mathcal T_{Y}$, that is $\mathcal{T}_Y\subseteq\operatorname{ker} \phi'(Y)$.

The \emph{horizontal space} at $Y$, denoted by $\mathcal H_{Y}$, is the orthogonal complement of $\mathcal T_{Y}$ with respect to the Frobenius inner product. One verifies that
\begin{equation*}
\label{eq:orth_hor_space}
      \mathcal H_{Y}^{} := \mathcal T_{Y}^\perp =\{ H \in\mathbb{R}^{n\times r}\, :\, Y^T H = H^T Y \},
\end{equation*}
since $0 = \langle H, Y S\rangle = \langle Y^T H, S\rangle$ holds for all skew-symmetric $S$ if and only if $Y^T H$ is symmetric. We point out that sometimes any subspace complementary to $\mathcal T_{Y}$ is called a horizontal space, but we will stick to the above choice, as it is the most common and has certain theoretical and practical advantages.
In particular, since $Y\in\mathcal H_Y$, the affine space $Y+\mathcal H_Y$ equals $\mathcal H_Y$, so it is just a linear space. 

The purpose of the horizontal space is to provide a unique way of representing a neighborhood of $X = Y Y^T$ in $\mathcal M_r^+$ through $\phi(Y + H) = (Y + H)(Y + H)^T$ with $H \in \mathcal H_{Y}$. 
Clearly,
\[
    \dim \mathcal H_{Y} = nr - \dim \mathcal O_r = nr - \frac{1}{2}r(r-1) = \dim \mathcal M_r^+.
\]
Moreover, the following holds.

\begin{proposition}
\label{prop: injectivity Dphi}
The restriction of $\phi'(Y)$ to $\mathcal H_{Y}$ is injective. In particular, it holds that
\[
    \| Y H^T + H Y^T \|_F \ge \sqrt{2}\sigma_r(Y) \| H \|_F \quad \text{for all $H \in \mathcal H_{Y}$,}
\]
where $\sigma_r(Y) > 0$ is the smallest singular value of $Y$.
This lower bound is sharp if $r<n$. For $r=n$ one has the sharp estimate
\[
    \| Y H^T + H Y^T \|_F \ge 2\sigma_r(Y) \| H \|_F \quad \text{for all $H \in \mathcal H_{Y}$.}
\]
As a consequence, in either case, $\operatorname{ker}\phi'(Y)=\mathcal{T}_Y$.
\end{proposition}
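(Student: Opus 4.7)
The plan is to reduce everything to an explicit matrix computation by fixing a thin SVD of $Y$ and writing $H$ in the corresponding orthonormal basis; this turns the horizontal constraint into a componentwise symmetry that decouples the Frobenius norm cleanly. Concretely, I would fix $Y = U \Sigma V^T$ with $U \in \mathbb R^{n \times r}$ having orthonormal columns, $V \in \mathcal O_r$, and $\Sigma = \operatorname{diag}(\sigma_1,\dots,\sigma_r)$ positive diagonal (since $Y$ has full column rank, $\sigma_r(Y)>0$). Completing $U$ to an orthogonal $n\times n$ matrix $[U \mid U_\perp]$, any $H \in \mathbb R^{n \times r}$ admits the parametrization
\[
    H = U A V^T + U_\perp B V^T, \qquad A \in \mathbb R^{r\times r},\ B \in \mathbb R^{(n-r)\times r}
\]
(the $B$ term is absent when $r=n$). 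Short computations give $Y^T H = V\Sigma A V^T$ and $H^T Y = V A^T \Sigma V^T$, so the horizontal condition $Y^T H = H^T Y$ is equivalent to $\Sigma A = A^T \Sigma$.

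The central algebraic observation is that the componentwise identity $\sigma_i A_{ij} = \sigma_j A_{ji}$ coming from $\Sigma A = A^T \Sigma$ equally implies $\Sigma A^T = A\Sigma$, so that $\Sigma A^T + A \Sigma = 2 A \Sigma$. Writing $YH^T + HY^T$ in the basis $[U\mid U_\perp]$ then yields the block form
\[
    [U\mid U_\perp]^T (YH^T + HY^T) [U\mid U_\perp] = \begin{bmatrix} 2 A\Sigma & \Sigma B^T \\ B\Sigma & 0 \end{bmatrix},
\]
from which $\|YH^T + HY^T\|_F^2 = 4\|A\Sigma\|_F^2 + 2\|B\Sigma\|_F^2$. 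Combining $\|X\Sigma\|_F^2 \ge \sigma_r(Y)^2 \|X\|_F^2$ with the orthogonality identity $\|H\|_F^2 = \|A\|_F^2 + \|B\|_F^2$ bounds this below by $2\sigma_r(Y)^2(2\|A\|_F^2 + \|B\|_F^2)$, which is at least $2\sigma_r(Y)^2\|H\|_F^2$ in general and equals $4\sigma_n(Y)^2\|H\|_F^2$ when $r=n$ (where $B$ is absent).

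For sharpness I would exhibit explicit equality cases. When $r<n$, the choice $H = u v_r^T$ with $u$ a unit vector orthogonal to $\operatorname{im}(Y)$ and $v_r$ the right singular vector at $\sigma_r$ satisfies $Y^T H = 0$ (so $H \in \mathcal H_Y$), and a direct calculation gives $\|YH^T + HY^T\|_F = \sqrt{2}\,\sigma_r(Y)$ while $\|H\|_F = 1$. When $r=n$, the choice $H = u_n v_n^T$ corresponds to $A = e_n e_n^T$ (diagonal, so automatically in $\mathcal H_Y$) and yields $YH^T + HY^T = 2\sigma_n(Y) u_n u_n^T$ of Frobenius norm $2\sigma_n(Y)\|H\|_F$. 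Injectivity of $\phi'(Y)|_{\mathcal H_Y}$ is then immediate from $\sigma_r(Y)>0$. For the kernel equality, any $H \in \mathbb R^{n\times r}$ decomposes orthogonally as $H = H_T + H_H$ with $H_T \in \mathcal T_Y$ and $H_H \in \mathcal H_Y$; since $\phi'(Y)[H_T] = 0$, the equation $\phi'(Y)[H]=0$ reduces to $\phi'(Y)[H_H]=0$, which forces $H_H = 0$ and hence $H \in \mathcal T_Y$.

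The main obstacle I anticipate is the elementary but crucial upgrade from $\Sigma A = A^T \Sigma$ to $\Sigma A^T = A\Sigma$, which produces the clean factor $2A\Sigma$ in the $(1,1)$-block and is responsible for the improved constant $2$ (rather than $\sqrt{2}$) in the $r=n$ case. Without it one would only recover the weaker uniform bound $\sqrt{2}\,\sigma_r(Y)$. Everything else is routine book-keeping once the SVD-adapted parametrization is in place.
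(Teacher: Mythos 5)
Your SVD/block-decomposition route is a genuinely different (and in principle viable) way to organize the computation, but the step you single out as ``the central algebraic observation'' is false. The horizontal condition $\Sigma A = A^T\Sigma$ (equivalently, $\Sigma A$ is symmetric) does \emph{not} imply $\Sigma A^T = A\Sigma$ (equivalently, $A\Sigma$ symmetric). Componentwise, the first says $\sigma_i A_{ij} = \sigma_j A_{ji}$ while the second says $\sigma_i A_{ji} = \sigma_j A_{ij}$, and these differ whenever the singular values are distinct and $A$ is not symmetric. Concretely, take $\Sigma = \mathrm{diag}(1,2)$ and $A = \bigl(\begin{smallmatrix}0 & 2\\ 1 & 0\end{smallmatrix}\bigr)$: then $\Sigma A = A^T\Sigma = \bigl(\begin{smallmatrix}0 & 2\\ 2 & 0\end{smallmatrix}\bigr)$ is symmetric, but $\Sigma A^T + A\Sigma = \bigl(\begin{smallmatrix}0 & 5\\ 5 & 0\end{smallmatrix}\bigr)$ whereas $2A\Sigma = \bigl(\begin{smallmatrix}0 & 8\\ 2 & 0\end{smallmatrix}\bigr)$. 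Your claimed identity $\|YH^T+HY^T\|_F^2 = 4\|A\Sigma\|_F^2 + 2\|B\Sigma\|_F^2$ fails on this example too ($50$ versus $68$ for the $(1,1)$-block contribution). You explicitly flag this step as the mechanism that yields the factor $2$ when $r=n$, so the error is not a side remark.

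The good news is that your block form up to that point is correct, and the argument can be repaired without changing the framework: the $(1,1)$-block is $\Sigma A^T + A\Sigma$, and expanding $\|\Sigma A^T + A\Sigma\|_F^2 = 2\|A\Sigma\|_F^2 + 2\operatorname{trace}(A\Sigma A\Sigma)$ together with $\Sigma A\Sigma = A^T\Sigma^2$ (which \emph{is} a valid consequence of the horizontal condition) gives $\operatorname{trace}(A\Sigma A\Sigma) = \operatorname{trace}(AA^T\Sigma^2) = \|\Sigma A\|_F^2$, hence $\|\Sigma A^T + A\Sigma\|_F^2 = 2\|\Sigma A\|_F^2 + 2\|A\Sigma\|_F^2$. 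From there, bounding each summand by $2\sigma_r(Y)^2\|A\|_F^2$ recovers both advertised constants ($\sqrt{2}\sigma_r$ in general, $2\sigma_r$ when $B$ is absent). This amended computation is essentially a basis-adapted restatement of the paper's trace identity $\|YH^T+HY^T\|_F^2 = 2\|Y^TH\|_F^2 + 2\|YH^T\|_F^2$, which avoids fixing a basis entirely and is arguably cleaner. Your sharpness examples and the orthogonal-decomposition argument for $\ker\phi'(Y)=\mathcal T_Y$ are fine.
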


\begin{proof}
For $Z \in \mathbb S^n$ we have $\tr( (Y H^T + H Y^T) Z ) = 2 \tr(Z Y H^T)$ by standard properties of the trace. Taking $Z = Y H^T + H Y^T$ yields
\begin{align*}
    \| Y H^T + H Y^T \|_F^2 &= 2 \tr( Y H^T Y H^T + H Y^T Y H^T ) = 2 \| Y^T H \|_F^2 + 2 \| Y H^T \|_F^2.
\end{align*}
To derive the second equality we used $Y^T H = H^T Y$ for $H \in \mathcal H_Y$. 
Clearly, $\| Y H^T \|_F^2 \ge \sigma_r(Y)^2 \| H \|_F^2$ and if $r = n$ we also have that $\| Y^T H \|_F^2 \ge \sigma_r(Y)^2 \| H \|_F^2$.
This proves the asserted lower bounds.
To show that they are sharp, let $(u_r,v_r)$ be a (normalized) singular vector tuple such that $Y v_r = \sigma_r(Y) u_r$.
If $r < n$, then for any $u$ such that $u^T Y = 0$ one verifies that the matrix $H = u v_r^T$ is in $\mathcal H_Y$ and achieves equality. When $r=n$, $H = u_r^{} v_r^T$ achieves it. 
\end{proof}

Since $\phi$ maps $\mathbb R_*^{n\times r}$ to $\mathcal M_r^+$, which is of the same dimension as $\mathcal H_{Y}$, the above proposition implies that $\phi'(Y)$ is a bijection between $\mathcal H_{Y}$ and $\mathcal T_{\phi(Y)} \mathcal M_r^+$.
This already shows that the restriction of $\phi$ to the linear space $Y + \mathcal H_{Y} = \mathcal H_Y$ is a local diffeomorphism between a neighborhood of $Y$ in $\mathcal H_{Y}$ and a neighborhood of $\phi(Y)$ in $\mathcal M_r^+$.
The subsequent more quantitative statement matches Theorem 6.3 in~\cite{Massart2020} on the injectivity radius of the quotient manifold $\mathbb R_*^{n \times r} / \mathcal O_r$.
For convenience we will provide a self-contained proof that is more algebraic and does not require the concept of quotient manifolds.

\begin{proposition}
\label{prop: injectivity of phi}
Let $\mathcal B_Y := \{ H \in \mathcal H_{Y} \colon \| H \|_F < \sigma_r(Y) \}$. 
Then the restriction of $\phi$ to $Y + \mathcal B_Y$ is injective and maps diffeomorphically to a (relatively) open neighborhood of $Y$ in $\mathcal M_r^+$.
\end{proposition}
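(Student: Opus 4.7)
The plan is to split the proposition into three pieces, with the crux being the positive definiteness of the $r \times r$ matrix $M := Y^T(Y+H)$ for every $H \in \mathcal{B}_Y$. Symmetry of $M$ follows from $H \in \mathcal{H}_Y$ (i.e., $Y^TH = H^TY$), and for positivity I observe that for any unit $v \in \mathbb{R}^r$,
\[
    v^T M v = \|Yv\|^2 + \langle Yv, Hv\rangle \ge \|Yv\|\bigl(\|Yv\| - \|Hv\|\bigr),
\]
which is strictly positive because $\|Yv\| \ge \sigma_r(Y) > \|H\|_F \ge \|Hv\|$ by the definition of $\mathcal{B}_Y$. The same estimate supplies the quantitative bound $\lambda_{\min}(M) \ge \sigma_r(Y)(\sigma_r(Y) - \|H\|_F)$, which I will need later.

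For the local diffeomorphism claim I would apply the inverse function theorem to $\phi$ restricted to the affine subspace $Y + \mathcal{H}_Y$, whose dimension matches $\dim \mathcal{M}_r^+$. It suffices to verify that $\phi'(Y+H)|_{\mathcal{H}_Y}$ is injective for every $H \in \mathcal{B}_Y$. Since $\ker \phi'(Y+H) = \mathcal{T}_{Y+H}$ by Proposition~\ref{prop: injectivity Dphi}, any $K \in \mathcal{H}_Y \cap \ker \phi'(Y+H)$ has the form $K = (Y+H)S$ for some skew-symmetric $S$; the horizontal constraint then translates into symmetry of $Y^TK = MS$, i.e.\ $MS + SM = 0$. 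Working in the eigenbasis of the PD matrix $M$, this reduces to $(\lambda_i+\lambda_j)\tilde S_{ij} = 0$, which forces $S = 0$ since $\lambda_i + \lambda_j > 0$.

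For global injectivity, suppose $\phi(Y+H_1) = \phi(Y+H_2)$ with $H_1, H_2 \in \mathcal{B}_Y$. Both factors have full column rank, so the standard uniqueness of full column-rank PSD factorizations yields $Y+H_2 = (Y+H_1)Q$ for some $Q \in \mathcal{O}_r$. The horizontal conditions now state that $M = Y^T(Y+H_1)$ and $MQ = Y^T(Y+H_2)$ are both symmetric, which gives $MQ = Q^TM$ and, upon transposing, $MQ^T = QM$. Subtracting, the skew part $A := Q - Q^T$ satisfies $MA + AM = 0$, and the eigenbasis argument from above (applied now with $M \succ 0$) forces $A = 0$. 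Hence $Q$ is symmetric orthogonal and commutes with $M$, so $Q = I - 2P$ for some orthogonal projection $P$ onto an $M$-invariant subspace. Decomposing $P = \sum_{j \in J} v_j v_j^T$ in an eigenbasis of $M$ with $Mv_j = \mu_j v_j$, a direct expansion of $H_2 = H_1 - 2(Y+H_1)P$ yields, after substantial telescoping using $Y^TH_1 = M - Y^TY$, the identity
\[
    \|H_2\|_F^2 = \|H_1\|_F^2 + 4\sum_{j \in J} \mu_j.
\]
If $Q \neq I$ then $J \neq \emptyset$, and combining this identity with $\mu_j \ge \sigma_r(Y)(\sigma_r(Y) - \|H_1\|_F)$ and the constraint $\|H_2\|_F < \sigma_r(Y)$ yields $(\|H_1\|_F - \sigma_r(Y))(\|H_1\|_F - 3\sigma_r(Y)) < 0$, forcing $\|H_1\|_F > 3\sigma_r(Y)$, which contradicts $H_1 \in \mathcal{B}_Y$. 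Hence $Q = I$ and $H_1 = H_2$. Openness of the image in $\mathcal{M}_r^+$ is then immediate from global injectivity together with the local diffeomorphism property.

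The main obstacle I anticipate is the algebraic reduction in the third step — identifying $Q$ as an involution commuting with $M$ and carrying out the cancellations that collapse $\|H_2\|_F^2$ into the clean expression $\|H_1\|_F^2 + 4\sum_j \mu_j$. A secondary subtlety is that the positivity of $M$ genuinely requires the inner-product-based estimate above: the naive bound $\|Y^TH\|_2 \le \|Y\|_2 \|H\|_F$ would only guarantee $M \succ 0$ under the far more restrictive condition $\|H\|_F < \sigma_r(Y)^2/\sigma_1(Y)$, whereas the argument above achieves the full radius $\sigma_r(Y)$ and hence the sharp injectivity radius.
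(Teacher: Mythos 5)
Your proof is correct and, for the heart of the argument (global injectivity), takes a genuinely different route than the paper's. The paper picks an SVD $Y = U\Sigma V^T$, writes each $H_i = U A_i V^T + U_\perp B_i V^T$, expands $(Y+H_i)(Y+H_i)^T - YY^T$ into four mutually orthogonal blocks, and reduces the problem to a Lyapunov-type estimate on $A_1 - A_2$ via Proposition~\ref{prop: injectivity Dphi} with $n=r$. You instead invoke the orthogonal equivalence of full-rank factorizations to get $Y + H_2 = (Y+H_1)Q$ with $Q \in \mathcal O_r$, translate the two horizontal constraints into the symmetry of $M = Y^T(Y+H_1)$ and $MQ$, deduce $Q = Q^T$ and $[M,Q] = 0$, and close with the clean trace identity $\|H_2\|_F^2 - \|H_1\|_F^2 = 4\tr(PM)$ where $Q = I - 2P$. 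This identity is sharper and more conceptual than the paper's block bookkeeping; it also gives the same tight radius. Your Cauchy--Schwarz argument for $M \succ 0$ (and the bound $\lambda_{\min}(M) \ge \sigma_r(Y)(\sigma_r(Y) - \|H\|_F)$) is likewise a cleaner version of the paper's decomposition $M = \tfrac12(Y+H)^T(Y+H) + \tfrac12(Y^TY - H^TH)$. The local-diffeomorphism step is essentially identical to the paper's.

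Two small slips to fix. First, "upon transposing" does not yield $MQ^T = QM$ from $MQ = Q^TM$ (transposing just reproduces the same equation); you get it by conjugating: multiply $MQ = Q^TM$ on the left by $Q$ and on the right by $Q^T$. Second, the quadratic $(\|H_1\|_F - \sigma_r(Y))(\|H_1\|_F - 3\sigma_r(Y)) < 0$ forces $\sigma_r(Y) < \|H_1\|_F < 3\sigma_r(Y)$, so the contradiction with $H_1 \in \mathcal B_Y$ comes from $\|H_1\|_F > \sigma_r(Y)$, not $\|H_1\|_F > 3\sigma_r(Y)$ as written. Neither error affects the validity of the argument.
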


It is interesting to note that $\mathcal B_Y$ is the largest possible ball in $\mathcal H_Y$ on which the result can hold, since the rank-one matrices $\sigma_i u_i v_i^T$ comprised of singular pairs of $Y$ all belong to $\mathcal H_{Y}$ and $Y - \sigma_r u_r v_r^T$ is rank-deficient.
Another important observation is that $\sigma_r(Y)$ does not depend on the particular choice of $Y$ within the orbit $Y \mathcal O_r$.

\begin{proof}
Consider $H_1, H_2 \in \mathcal B_Y$. Let $Y = U \Sigma V^T$ be a singular value decomposition of $Y$ with $U \in \mathbb R^{n \times r}$ and $V \in \mathbb R^{r \times r}$ having orthonormal columns. We assume $r < n$. Then by $U_\perp \in \mathbb R^{n \times (n-r)}$ we denote a matrix with orthonormal columns and $U^T U_\perp = 0$. In the case $r = n$, the terms involving $U_\perp$ in the following calculation are simply not present.  We write
\[
    H_1 = U A_1 V^T + U_\perp B_1 V^T, \quad H_2 = U A_2 V^T + U_\perp B_2 V^T.
\]
Since $H_1, H_2 \in \mathcal H_{Y}$, we have
\[
    \Sigma A_1^{} = A_1^T \Sigma, \quad \Sigma A_2^{} = A_2^T \Sigma.
\]
Then a direct calculation yields
\begin{equation*}
\label{eq: decomposition}
    \begin{aligned}
        (Y + H_1)(Y+H_1)^T - Y Y^T 
        &=U [\Sigma A_1^T + A_1 \Sigma + A_1 A_1^T] U^T 
        \\
        &+U [\Sigma + A_1] B_1^T U_\perp^T + U_\perp B_1 [\Sigma + A_1^T]U^T + U_\perp B_1 B_1^T U_\perp^T,
    \end{aligned}
\end{equation*}
and analogously for $(Y + H_2)(Y+H_2)^T - Y Y^T$.
Since the four terms in the above sum are mutually orthogonal in the Frobenius inner product, the equality $(Y + H_1)(Y+H_1)^T = (Y + H_2)(Y+H_2)^T$ particularly implies
\[
    \Sigma A_1^T + A_1 \Sigma + A_1 A_1^T = \Sigma A_2^T + A_2 \Sigma + A_2 A_2^T,
\]
as well as
\begin{equation}
\label{eq: second term}
    (\Sigma + A_1) B_1^T = (\Sigma + A_2) B_2^T.
\end{equation}
The first of these equations can be written as
\[
    \Sigma (A_1 - A_2)^T + (A_1 - A_2) \Sigma = A_2 (A_2 - A_1)^T - (A_1 - A_2)A_1^T. 
\]
By Proposition~\ref{prop: injectivity Dphi} (with $n=r$, $Y = \Sigma$ and $H = A_1 - A_2$),
\[
    \| \Sigma (A_1 - A_2)^T + (A_1 - A_2) \Sigma \|_F \ge 2 \sigma_r(Y) \| A_1 - A_2 \|_F,
\]
whereas
\begin{equation*}
        \| A_2 (A_2 - A_1)^T - (A_1 - A_2)A_1^T \|_F 
        \le (\| H_2 \|_F + \| H_1 \|_F) \| A_1 - A_2 \|_F.
\end{equation*}
Since $\| H_2 \|_F + \| H_1 \|_F < 2 \sigma_r(Y)$, this  shows that we must have $A_1 = A_2$, which then by~\eqref{eq: second term} also implies $B_1 = B_2$, since $\Sigma + A_1$ is invertible.

Hence, we have proven that $\phi$ is an injective map from $Y + \mathcal B_Y$ to $\mathcal M_r^+$.
To validate that it is a diffeomorphism onto its image we show that it is locally a diffeomorphism, for which again it suffices to confirm that $\phi'(Y + H)$ is injective on $\mathcal H_Y$ for every $H \in \mathcal B_Y$ (since $\mathcal H_Y$ and $\mathcal M_r^+$ have the same dimension).
It follows from Proposition~\ref{prop: injectivity Dphi} (with $Y$ replaced by $Y+H$, which has full column rank) that the null space of $\phi'(Y+H)$ equals $\mathcal T_{Y + H}$.
We claim that $\mathcal T_{Y + H} \cap \mathcal H_{Y} = \{0\}$, which proves the injectivity of $\phi'(Y+H)$ on $\mathcal H_Y$.
Indeed, let $K$ be an element in the intersection, i.e., $K = (Y+H)S$ for some skew-symmetric $S$ and $Y^T K - K^T Y = 0$.
Inserting the first relation into the second, and using $Y^T H = H^T Y$, yields the homogenuous Lyapunov equation
\begin{equation}
\label{eq: Lyapunov eq}
    (Y^T Y + Y^T H) S + S (Y^T Y + Y^T H) = 0.
\end{equation}
The symmetric matrix 
\[
    Y^T Y + Y^T H = \frac{1}{2} (Y + H)^T(Y + H) + \frac{1}{2}(Y^T Y - H^T H)
\]
in~\eqref{eq: Lyapunov eq} is positive definite, since $\lambda_1(H^TH)\le\| H^T H \|_F < \sigma_r(Y)^2 = \lambda_r(Y^T Y)$ (here $\lambda_i$ denotes the $i$-th eigenvalue of the corresponding matrix).
But in this case~\eqref{eq: Lyapunov eq} implies $S = 0$, that is, $K=0$.
\end{proof}

Finally, it is also possible to provide a lower bound on the radius of the largest ball around $X = YY^T$ such that its intersection with $\mathcal M_r^+$ is in the image $\phi(Y + \mathcal B_Y)$ (so that an inverse map $\phi^{-1}$ is defined). 

\begin{proposition}
\label{prop: inverse_inject_radius}
Any $\tilde X \in \mathcal M_r^+$ satisfying $\| \tilde X - X \|_F < \frac{2\lambda_r(X)}{\sqrt{r+4} + \sqrt{r}}$ is in the image $\phi(Y + \mathcal B_Y)$, that is, there exists a unique $H \in \mathcal B_Y$ such that $\tilde X = (Y + H)(Y+H)^T$. 
\end{proposition}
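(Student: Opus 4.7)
The plan is to construct $H$ explicitly via an orthogonal Procrustes (polar-decomposition) argument and then bound its Frobenius norm using the block decomposition from the proof of Proposition~\ref{prop: injectivity of phi}. Given any factor $\tilde Y_0 \in \mathbb R^{n \times r}_*$ with $\tilde Y_0 \tilde Y_0^T = \tilde X$, form the SVD $Y^T \tilde Y_0 = U_* \Sigma_* V_*^T$ and set $\tilde Y := \tilde Y_0 V_* U_*^T$. Then $Y^T \tilde Y = U_* \Sigma_* U_*^T \succeq 0$ is symmetric, so $H := \tilde Y - Y$ satisfies $Y^T H = Y^T \tilde Y - Y^T Y \in \mathbb S^r$, which places $H$ in $\mathcal H_Y$, with $\phi(Y+H) = \tilde X$ by construction. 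Uniqueness of such an $H$ inside $\mathcal B_Y$ then follows from Proposition~\ref{prop: injectivity of phi}, reducing the entire problem to showing $\|H\|_F < \sigma_r(Y)$.

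For the size bound, I would re-introduce the SVD $Y = U \Sigma V^T$ with $\sigma := \sigma_r(Y)$ and decompose $H = U A V^T + U_\perp B V^T$ exactly as in the proof of Proposition~\ref{prop: injectivity of phi}; the horizontal condition $H \in \mathcal H_Y$ becomes $\Sigma A = A^T \Sigma$. The orthogonal-block expansion of $\tilde X - X$ derived there yields
\[
    \|\tilde X - X\|_F^2 \;=\; \|\Sigma A^T + A\Sigma + AA^T\|_F^2 + 2 \|(\Sigma + A) B^T\|_F^2 + \|BB^T\|_F^2,
\]
and each summand can be lower-bounded separately: Proposition~\ref{prop: injectivity Dphi} applied to the square factor $\Sigma$ gives $\|\Sigma A^T + A\Sigma\|_F \geq 2\sigma \|A\|_F$; a direct singular-value estimate yields $\|(\Sigma + A) B^T\|_F \geq (\sigma - \|A\|_F) \|B\|_F$; and Cauchy--Schwarz on the at most $r$ non-zero squared singular values of $B$ gives $\|BB^T\|_F \geq \|B\|_F^2 / \sqrt{r}$. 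Writing $a := \|A\|_F$, $b := \|B\|_F$ and $e := \|\tilde X - X\|_F$, these ingredients reduce the claim to a scalar inequality coupling $a$, $b$, $e$, $\sigma$ and $r$.

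The final step is to show that whenever $\|H\|_F^2 = a^2 + b^2$ attains the critical value $\sigma^2$, the resulting scalar inequality forces $e \geq \frac{2\sigma^2}{\sqrt{r+4}+\sqrt{r}}$. Combined with the continuity of the polar-decomposition assignment $\tilde X \mapsto \tilde Y$ (which reduces to $\tilde Y = Y$ at $\tilde X = X$) on the connected component of $\{\tilde X \in \mathcal M_r^+ : \|\tilde X - X\|_F < \rho\}$ containing $X$, this rules out $\|H\|_F \geq \sigma_r(Y)$ under the hypothesis. I expect the main obstacle to be precisely this final algebraic step: a crude triangle-inequality estimate alone yields only a rank-independent bound of order $\sqrt{2}-1$, so one has to combine the $\sqrt{r}$-factor coming from the rank constraint on $\|BB^T\|_F$ with the $\sqrt{2}$-factor coming from Proposition~\ref{prop: injectivity Dphi} in a carefully balanced way in order to extract the precise constant $\frac{2}{\sqrt{r+4}+\sqrt{r}}$.
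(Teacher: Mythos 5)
Your construction of $H$ via the polar decomposition of $Y^T\tilde Y_0$ is exactly the one the paper uses, so up to that point you are on the right track: $H \in \mathcal H_Y$, $\phi(Y+H)=\tilde X$, and uniqueness inside $\mathcal B_Y$ reduces to the norm bound $\|H\|_F<\sigma_r(Y)$. From there, however, the two arguments diverge fundamentally. The paper goes \emph{forward}: it splits $\|H\|_F^2 = \|YY^\dagger H\|_F^2 + \|(I-YY^\dagger)H\|_F^2$ and \emph{upper}-bounds each piece directly in terms of $e:=\|\tilde X - X\|_F$ --- the range part via von Neumann's trace inequality and the matrix-square-root Lipschitz estimate $\|S-\Sigma\|_F\le \sigma_r(Y)^{-1}\|S^2-\Sigma^2\|_F$, giving $\|YY^\dagger H\|_F\le e/\sigma_r(Y)$; the complementary part via a rank-$r$ Cauchy--Schwarz, giving $\|(I-YY^\dagger)H\|_F^2\le\sqrt{r}\,e$. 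Adding and solving the resulting quadratic in $e$ yields the threshold with no detour. You instead propose to go \emph{backward}: re-use the four-term orthogonal expansion of $\tilde X - X$ from Proposition~\ref{prop: injectivity of phi}, \emph{lower}-bound each block norm in $a=\|A\|_F,b=\|B\|_F$, and then argue by contradiction plus a connectedness/continuity step. That is a genuinely different route, and it would (if completed) give a strictly stronger radius than the paper claims, but it is also more work and less modular than the paper's direct bound.

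There are two concrete gaps in your sketch. First, the decisive scalar inequality --- that at $a^2+b^2=\sigma^2$ one has $(2\sigma a-a^2)^2 + 2(\sigma-a)^2b^2 + b^4/r \ge \bigl(\tfrac{2\sigma^2}{\sqrt{r+4}+\sqrt{r}}\bigr)^2$ --- is only named, not established; since the whole argument rests on it, the proof is not actually carried out. (Also note you do not need to match the paper's constant exactly; any lower bound $\ge \rho^2$ on the critical circle suffices, and your estimates appear to give much more, so the difficulty you anticipate in ``extracting the precise constant'' is a misdiagnosis.) Second, the continuity/connectedness step is not as innocent as stated: the polar factor $\tilde Q$ in $Y^T\tilde Z = P\tilde Q^T$ is only uniquely determined when $Y^T\tilde Z$ is nonsingular, so continuity of $\tilde X\mapsto H$ (not merely of $\|H\|_F$, which does follow from the singular-value-sum formula in Remark~\ref{rem: polar_decomposition}) requires justification along the path; and the set $\{\tilde X\in\mathcal M_r^+ : \|\tilde X - X\|_F<\rho\}$ need not be connected a priori, which you acknowledge but then would have to rule out for the statement as written. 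The paper's direct upper bound on $\|H\|_F$ sidesteps both of these issues entirely.
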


Observe that one could take 
\begin{equation}
\label{eq: clean_inverse_inject_radius}
    \| \tilde X - X \|_F \le \frac{\lambda_r(X)}{\sqrt{r+4}}.
\end{equation}
as a slightly cleaner sufficient condition in the proposition. 

\begin{proof}
Let $\tilde X = \tilde Z \tilde Z^T$ with $\tilde Z\in\mathbb{R}^{n\times r}$ and assume a polar decomposition of $Y^T \tilde Z = P \tilde Q^T$, where $P, \tilde Q \in \mathbb R^{r \times r}$, $P$ is positive semidefinite, and $\tilde Q$ is orthogonal. Let $Z = \tilde Z \tilde Q$. 
Then
\begin{equation}
\label{eq: H_construction} 
    H = Z - Y
\end{equation}
satisfies $(Y+H)(Y+H)^T = \tilde X$, and since $Y^T H = P - Y^T Y$ is symmetric, we have $H \in \mathcal H_Y$. We need to show $H \in \mathcal B_Y$, that is, $\| H \|_F < \sigma_r(Y)$.
Proposition~\ref{prop: injectivity of phi} then implies that $H$ is unique in $\mathcal B_Y$.
Let $Y Y^\dagger$ be the orthogonal projector onto the column span of $Y$ and $Z_1 = Y Y^\dagger Z$. 
With that, we have the decomposition
\begin{equation}
\label{eq: error decomposition}
    \| H \|_F^2 = \| Y Y^\dagger H \|_F^2 + \|(I - Y Y^\dagger) H \|_F^2 = \| Z_1 - Y \|_F^2 + \|(I - Y Y^\dagger) Z \|_F^2.
\end{equation}
We estimate both terms separately.
Since $Y^T  Z_1 = Y^T Z = P$ is symmetric and positive semidefinite, the first term satisfies
\begin{align}
\label{eq: norm first part} 
    \| Z_1 - Y \|_F^2 
    &=  \| Z_1 \|_F^2 - 2 \tr(Y^T Z_1) + \| Y \|_F^2 \notag
    \\ 
    &= \| (Z_1 Z_1^T)^{1/2} \|_F^2 - 2 \sum_{i=1}^r \sigma_i(Y^T Z_1) + \| (Y Y^T)^{1/2} \|_F^2.
\end{align} 
A simple consideration using a singular value decomposition of $Y$ and $Z_1$ reveals that
\[
    (YY^T)^{1/2}(Z_1Z_1^T)^{1/2} = \tilde U Y^T Z_1 \tilde V^T
\]
for some $\tilde U$ and $\tilde V$ with orthonormal columns. Consequently, by von Neumann's trace inequality (see, e.g.,~\cite[Theorem~7.4.1.1]{Horn2013}), we have
\[
    \tr((YY^T)^{1/2}(Z_1Z_1^T)^{1/2}) \le \sum_{i=1}^r \sigma_i(Y^T Z_1).
\]
Inserting this into~\eqref{eq: norm first part} yields
\[
    \| Z_1 - Y \|_F^2 \le \| (Z_1 Z_1^T)^{1/2} - (Y Y^T)^{1/2} \|_F^2.
\]
We remark that we could have concluded this inequality from~\cite[Theorem~2.7]{Balan2021} where it is also stated. It actually holds for any $Z_1$ for which $Y^T Z_1$ is symmetric and positive semidefinite using the same argument (in particular for $Z_1$ replaced with the initial $Z$). Let now $Y = U \Sigma V^T$ be a singular value decomposition of $Y$ with $\sigma_r(Y)$ the smallest positive singular value. Then $Z_1 Z_1^T = U S^2 U^T$ for some positive semidefinite $S^2 \in \mathbb{R}^{r \times r}$ and it follows from well-known results, (cf.~\cite{Schmitt1992}), that\footnote{For completeness we provide the proof.
The matrix $S - \Sigma$ is the unique solution to the matrix equation $\mathcal{L}(M) = SM + M \Sigma = S^2 - \Sigma^2$. 
Indeed, the linear operator $\mathcal{L}$ on $\mathbb R^{r \times r}$ is symmetric in the Frobenius inner product and has positive eigenvalues $\lambda_{i,j} =  \lambda_i(S) + \Sigma_{jj} \ge \sigma_r(Y)$ (the eigenvectors are rank-one matrices $w_i e_j^T$ with $w_i$ the eigenvectors of $S$). 
Hence $\| S^2 - \Sigma^2 \|_F = \|\mathcal{L}(S - \Sigma) \|_F \ge \sigma_r(Y) \| S - \Sigma \|_F$.}
\[
    \| (Z_1 Z_1^T)^{1/2} - (Y Y^T)^{1/2} \|_F^2 = \| S - \Sigma \|_F^2 \le \frac{1}{\sigma_r(Y)^2} \| S^2 - \Sigma^2 \|_F^2 = \frac{1}{\sigma_r(Y)^2} \| Z_1 Z_1^T - Y Y^T \|_F^2.
\]
Noting that $Z_1 Z_1^T = (Y Y^\dagger) \tilde X (Y Y^\dagger)$ and $YY^T = (Y Y^\dagger) X (Y Y^\dagger)$ we conclude the first part with
\begin{equation}
\label{eq: first part}
    \| Z_1 - Y \|_F^2 \le \frac{1}{\sigma_r(Y)^2} \| (Y Y^\dagger)(\tilde X - X)(Y Y^\dagger) \|_F^2 \le \frac{1}{\sigma_r(Y)^2} \| \tilde X - X \|_F^2.
\end{equation}
The second term in~\eqref{eq: error decomposition} can be estimated as follows:
\begin{align}
    \|(I - Y Y^\dagger) Z \|_F^2  
    &= \tr((I - Y Y^\dagger) \tilde X (I - Y Y^\dagger)) \notag 
    \\
    &\le \sqrt{r} \| (I - Y Y^\dagger)\tilde X (I - Y Y^\dagger) \|_F \notag 
    \\
    &= \sqrt{r} \| (I - Y Y^\dagger)(\tilde X - X) (I - Y Y^\dagger) \|_F \le \sqrt{r} \| \tilde X - X \|_F, \label{eq: second part}
\end{align}
where we used the Cauchy Schwarz inequality and the fact that $(I - Y Y^\dagger) \tilde X (I - Y Y^\dagger)$ has rank at most $r$.

As a result, combining~\eqref{eq: error decomposition} with~\eqref{eq: first part} and~\eqref{eq: second part}, we obtain
\begin{equation}
\label{eq: inverse estimate}
    \| H \|_F^2 \le \frac{1}{\sigma_r(Y)^2} \| \tilde X - X \|_F^2 + \sqrt{r}\| \tilde X - X \|_F.
\end{equation}
The right side is strictly smaller  than $\sigma_r(Y)^2$ when  
\[
    \| \tilde X - X \|_F < -\frac{\sigma_r(Y)^2 \sqrt{r}}{2} + \sqrt{\frac{\sigma_r(Y)^4 r}{4} + \sigma_r(Y)^4} = \frac{\sigma_r(Y)^2}{2}(\sqrt{r+4} - \sqrt{r}) = \frac{2 \lambda_r(X)}{\sqrt{r+4} + \sqrt{r}},
\]
which proves the assertion. 
\end{proof}

\begin{remark} 
\label{rem: polar_decomposition}
From definition~\eqref{eq: H_construction} of $H$, since $\tilde Q$ is given by the polar decomposition $Y^T\tilde Z= P\tilde Q^T$, it follows that \[\|H\|_F=\|Y - \tilde Z \tilde Q \|_F = \min_{Q\in\mathcal{O}_r}\|Y-\tilde Z Q\|_F,\] see, e.g.,~\cite[section~7.4.5]{Horn2013}. 
In general, given any $Y,\tilde Z \in \mathbb R^{n \times r}$, both of rank $r$, the minimizer $Z = \tilde Z \tilde Q$ in this problem is necessarily obtained by choosing $\tilde Q$ from the polar decomposition of $Y^T\tilde Z$ so that $Y^T Z $ is necessarily symmetric, that is, $Z$ and hence $Z - Y$ are in the horizontal space $\mathcal H_Y$. In fact, the quantity $\min_{Q \in \mathcal O_r} \| Y- \tilde Z Q \|_F$ defines a Riemannian distance between the orbits $Y\mathcal O_r$ and $\tilde Z\mathcal O_r$ in the corresponding quotient manifold; see~\cite[Proposition~5.1]{Massart2020}. 
\end{remark}

\subsection{A time interval for the factorized problem}
\label{subsec: time_interval}

We now return to the factorized problem formulation~\eqref{eq: BM}. 
Let $Y_t$ be an optimal solution of~\eqref{eq: BM} at some fixed time point~$t$ (so that $Y_t^{} Y_t^T = X^{}_t$ and $\rank Y_t^{} = r$). 
Based on the above propositions we are able to state a result on the allowed time interval $[t,t+\Delta t]$ for which the factorized problem~\eqref{eq: BM} is guaranteed to admit unique solutions on the horizontal space $\mathcal H_{Y_t}$ corresponding to the original problem~\eqref{eq: SDP}.
For this, exploiting the smoothness of the curve $t\mapsto X_t$, we first define
\begin{equation}
    \label{eq: X_lipschitz}
L:=\max_{t \in [0,T] } \| \dot X_t \|_F ,
\end{equation}
a uniform bound on the time derivative, as well as
\begin{equation}
    \label{eq: small_eig_bound}
    \lambda_r(X_t) \ge \lambda_* > 0
\end{equation}
on the smallest eigenvalue of $X_t$, are available for $t \in [0,T]$. 
Notice that the existence of such bounds is without any further loss of generality: the existence of $L$ follows from~\ref{cons: unique}, which guarantees that $X_t$ is a smooth curve, while the existence of $\lambda_*$ is guaranteed by~\ref{cons: rank}, since $X_t$ has a constant rank. 
 
\begin{theorem}
\label{thm: minimum time interval}
Let $Y_t$ be a solution of~\eqref{eq: BM} as above. 
Then for $\Delta t < \frac{2\lambda_*}{L(\sqrt{r+4} + \sqrt{r})}$ there is a unique and smooth solution curve $s \mapsto Y_s$ for the problem~\eqref{eq: BM} restricted to $\mathcal{H}_{Y_t}$ in the time interval $s \in [t, t + \Delta t]$.
\end{theorem}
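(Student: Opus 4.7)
The plan is to reduce the theorem to a direct application of Proposition~\ref{prop: inverse_inject_radius}, exploiting the fact that the hypothesis $\Delta t < \frac{2\lambda_*}{L(\sqrt{r+4} + \sqrt{r})}$ is tuned exactly so that $\|X_s - X_t\|_F$ stays within the inverse injectivity radius of $\phi$ at $Y_t$ throughout the interval $[t,t+\Delta t]$. Concretely, I would first integrate $\dot X_s$ and use~\eqref{eq: X_lipschitz} to obtain the Lipschitz estimate $\|X_s - X_t\|_F \le L(s-t) \le L \Delta t$ for all $s \in [t,t+\Delta t]$; combining this with~\eqref{eq: small_eig_bound} and the hypothesis on $\Delta t$ gives
\[
    \|X_s - X_t\|_F \ < \ \frac{2\lambda_*}{\sqrt{r+4}+\sqrt{r}} \ \le \ \frac{2\lambda_r(X_t)}{\sqrt{r+4}+\sqrt{r}}
\]
uniformly in $s \in [t,t+\Delta t]$.

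Applying Proposition~\ref{prop: inverse_inject_radius} with $Y = Y_t$ and $\tilde X = X_s$ for each such $s$ then yields a unique $H_s \in \mathcal{B}_{Y_t}$ with $(Y_t + H_s)(Y_t + H_s)^T = X_s$. Setting $Y_s := Y_t + H_s$, the inclusion $H_s \in \mathcal{B}_{Y_t} \subset \mathcal{H}_{Y_t}$ together with $Y_t \in \mathcal{H}_{Y_t}$ shows $Y_s \in \mathcal{H}_{Y_t}$. Since $X_s$ is the unique optimum of~\eqref{eq: SDP} at time $s$ by~\ref{cons: unique}, the point $Y_s$ satisfies the constraints $\mathcal{A}_s(Y_s Y_s^T) = b_s$ and achieves the optimal value $\langle C_s, X_s\rangle$ of~\eqref{eq: BM}, so it is a solution. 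Any other $Y\in \mathcal{H}_{Y_t}$ that solves~\eqref{eq: BM} at time $s$ must also satisfy $Y Y^T = X_s$ by~\ref{cons: unique}, hence coincides with $Y_s$ as long as it lies in $Y_t+\mathcal{B}_{Y_t}$, giving the uniqueness of the curve inside that neighborhood.

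For the smoothness of $s \mapsto Y_s$ I would invoke Proposition~\ref{prop: injectivity of phi}, which says that $\phi$ restricts to a diffeomorphism from $Y_t + \mathcal{B}_{Y_t}$ onto an open neighborhood $\mathcal{U} \subset \mathcal{M}_r^+$ of $X_t$. The estimate above ensures $X_s \in \mathcal{U}$ for $s \in [t, t+\Delta t]$, and since $s \mapsto X_s$ is continuously differentiable by~\ref{cons: unique}, the composition $s \mapsto Y_s = \phi^{-1}(X_s)$ inherits continuous differentiability.

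The main conceptual obstacle is not a hard calculation but rather unpacking what ``solution of~\eqref{eq: BM} restricted to $\mathcal{H}_{Y_t}$'' actually means: one must observe that the orthogonal-group non-uniqueness of the Burer--Monteiro factorization, which would otherwise block any path-following argument, is exactly quotiented out by the horizontal slice $Y_t + \mathcal{H}_{Y_t}$, and that Proposition~\ref{prop: inverse_inject_radius} provides a quantitative radius $\mathcal{B}_{Y_t}$ on which the representative of the equivalence class $X_s \mathcal{O}_r$ in this slice is single-valued. Once this is made explicit, the rest of the argument is a clean assembly of the preceding two propositions with the Lipschitz bound on the SDP solution trajectory.
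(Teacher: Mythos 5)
Your proposal is correct and follows essentially the same route as the paper's proof: bound $\|X_s - X_t\|_F$ via the Lipschitz estimate from~\eqref{eq: X_lipschitz}, compare against the inverse injectivity radius from Proposition~\ref{prop: inverse_inject_radius} using the lower bound $\lambda_*\le\lambda_r(X_t)$, and then invoke the diffeomorphism property of Proposition~\ref{prop: injectivity of phi} to set $Y_s=\phi^{-1}(X_s)$ and obtain smoothness from that of $s\mapsto X_s$. Your explicit remark that uniqueness is guaranteed within the ball $Y_t+\mathcal B_{Y_t}$ (rather than on all of $\mathcal H_{Y_t}$, where e.g.\ $-Y_t$ would give another preimage of $X_t$) is a small but welcome clarification that the paper's terse proof leaves implicit.
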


\begin{proof}[Proof]
It suffices to show that for $s$ in the asserted time interval the solutions $X_s$ of~(SDP$_s$) lie in the image $\phi(Y_t+ \mathcal B_{Y_t})$.
By Proposition~\ref{prop: inverse_inject_radius}, this is the case if $\|X_s-X_t\|_F<\frac{2\lambda_r}{\sqrt{r+4} + \sqrt{r}}$, where $\lambda_r$ is the smallest eigenvalue of $X_t$.
Since
\[
    \|X_s-X_t\|_F\le\int_t^{s}\|\dot X_\tau\|_F \;d\tau\le L(s-t),
\]
and $\lambda_*\le\lambda_r$, the condition $s-t< \frac{2\lambda_*}{L(\sqrt{r+4} + \sqrt{r})}$ is sufficient. Then Proposition~\ref{prop: injectivity of phi} provides the smooth solution curve $Y_s = \phi^{-1}(X_s)$ for problem~\eqref{eq: BM}.
\end{proof}

The results of this section motivate the definition of a version of~\eqref{eq: BM} restricted to $\mathcal{H}_{Y_t}$, which we provide in the next section.
 
\section{Path following the trajectory of solutions}
\label{sec: predictor_corrector}

In this section, we present a path-following procedure for computing a sequence of approximate solutions $\{\hat Y_0,\dots,\hat Y_k,\dots,\hat Y_K \}$ at different time points that tracks a trajectory of solutions $t\mapsto Y_t$ to the Burer--Monteiro reformulation~\eqref{eq: BM}. 
From this sequence we are then able to reconstruct a corresponding sequence of approximate solutions $\hat X^{}_k=\hat Y_k^{}\hat Y_k^T$ tracking the trajectory of solutions $t\mapsto X_t$ for the full space TV-SDP problem~\eqref{eq: SDP}. 
The path-following method is based on iteratively solving the linearized KKT system. 
Given an iterate $Y_t$ on the path, we explained in the previous section how to eliminate the problem of nonuniqueness of the path in a small time interval $[t,t+\Delta t]$ by considering problem~\eqref{eq: BM} restricted to the horizontal space $\mathcal{H}_{Y_t}$.
We now need to ensure that this also guarantees that the linearized KKT system admits a unique solution. 
We show in Theorem~\ref{thm: SOSC} that this is indeed guaranteed under standard regularity assumptions on the original problem~\eqref{eq: SDP}. 
This is a remarkable fact of somewhat independent interest.

\subsection{Linearized KKT conditions and second-order sufficiency}
\label{subsec: linar_kkt}
Given an optimal solution $X_t^{}=Y_t^{}Y_t^T$ at time $t$, we aim to find a solution $X_{t+\Delta t}^{}= Y_{t+\Delta t}^{} Y_{t+\Delta t}^T$ at time $t + \Delta t$. By the results of the previous section, the next solution can be expressed in a unique way as
\begin{equation*}
    Y_{t+\Delta t} = Y_t + \Delta Y,
\end{equation*}
where $\Delta Y$ is in the horizontal space $\mathcal H_{Y_t}$, provided that $\Delta t$ is small enough. 

We define the following maps:
\begin{equation}
\label{eq: functions_def}
    \begin{aligned}
        f_{t+\Delta t}(Y)&\coloneqq 
        \langle C_{t+\Delta t}, Y Y^T \rangle,
        \\
        g_{t+\Delta t}(Y)&\coloneqq \mathcal A_{t + \Delta t}(YY^T) - b_{t+\Delta t}, 
        \\
         h_{Y_t}(Y)&\coloneqq Y^T_tY-Y^TY^{}_t.
    \end{aligned}
\end{equation}
By definition, $\Delta Y \in \mathcal H_{Y_t}$ if and only if $h_{Y_t}(\Delta Y) = 0$. 
For symmetry reasons we use the equivalent condition $h_{Y_t}(Y_t + \Delta Y) = 0$ (which reflects the fact that $Y_t + \mathcal H_{Y_t}$ is actually a linear space). 

To find the new iterate $Y_{t+\Delta t}$ we hence consider the problem
\begin{equation}
\label{eq: BM_projected}
\tag{\mbox{BM${}_{Y_t, t+\Delta t}$}}
    \begin{aligned}
        &\min_{Y\in\mathbb{R}^{n\times r}} && f_{t+\Delta t}(Y)
        \\
        &\text{\ \ \ s.t.}&& g_{t+\Delta t}(Y)= 0
        \\
        &&&h_{Y_t}(Y)=0.
    \end{aligned}
\end{equation}
This is a quadratically constrained quadratic problem whose Lagrangian is
\begin{equation}
    \label{eq: lagrangian}
    \mathcal L_{Y_t,t+\Delta t}(Y,\lambda,\mu):= f_{t+\Delta t}(Y)-\langle \lambda, g_{t+\Delta  t}(Y)\rangle- \langle \mu, h_{Y_t}(Y) \rangle 
\end{equation}
with multipliers $\lambda \in \mathbb{R}^m$ and $\mu \in \mathbb {S}^{r}_{skew}$.
The KKT conditions of  problem~\eqref{eq: BM_projected} are
\begin{equation}
\label{eq: KKT}
    \begin{aligned} 
        \nabla_Y\mathcal L_{Y_t,t+\Delta t}(Y,\lambda,\mu) =0 
        \\
        g_{t+\Delta t}(Y) = 0
        \\
        h_{Y_t}(Y) = 0.
    \end{aligned} 
\end{equation}
 Hence,~\eqref{eq: KKT} reads explicitly as
\[
    \mathcal F_{Y_t,t+\Delta t}(Y,\lambda,\mu) \coloneqq \begin{bmatrix} 
        2C_{t+\Delta t} Y - 2\mathcal A^*_{t+\Delta t}(\lambda) Y - 2Y_t\mu 
        \\ 
        \mathcal A_{t+\Delta t}(YY^T) - b_{t+\Delta t}
        \\ 
        Y_t^T Y - Y^T Y_t
    \end{bmatrix}  
    =0.
\]
The linearization of~\eqref{eq: KKT} at $(Y_t,\lambda_t,\mu_t)$ leads to a linear system
\begin{equation}
\label{eq: linear_system_step}
    \mathcal{J}_{Y_t,t+\Delta t}(Y_t,\lambda_t,\mu_t)
    \begin{bmatrix}
        \Delta Y
        \\
        \lambda_t+\Delta \lambda
        \\
        \mu_t + \Delta \mu
    \end{bmatrix}
    =
    \begin{bmatrix}
        -\nabla_Y f_{t+\Delta t}(Y_t)
        \\
        g_{t+\Delta t}(Y_t)
        \\
        0
    \end{bmatrix},
\end{equation}
where $\mathcal{J}_{Y_t,t+\Delta t}(Y,\lambda,\mu)$ denotes the derivative of $\mathcal F_{Y_t,t+\Delta t}$ at $(Y,\lambda,\mu)$.
Note that it actually does not depend on~$\mu$, but we will keep this notation for consistency. 
As a linear operator on $\mathbb R^{n \times r} \times \mathbb R^m \times \mathbb S^r_{skew}$, $\mathcal{J}_{Y_t,t+\Delta t}(Y,\lambda,\mu)$ can be written in block matrix notation as follows,
\[
    \mathcal{J}_{Y_t,t+\Delta t}(Y,\lambda,\mu):=
    \begin{bmatrix}
        \nabla^2_Y\mathcal{L}_{Y_t,t+\Delta t}(\lambda)&& -g'_{t+\Delta t}(Y)^*&&-h_{Y_t}^*
        \\
        -g'_{t+\Delta t} (Y)&&0&&0
        \\
        -h_{Y_t}&&0&&0
    \end{bmatrix},
\] 
where from \eqref{eq: functions_def} and \eqref{eq: lagrangian} one derives
\begin{align*}
    \nabla^2_Y\mathcal{L}_{Y_t,t+\Delta t}&:H \mapsto 2(C_{t+\Delta t}^{}-\mathcal A^*_{t+\Delta t}(\lambda))H,
    \\ 
    g'_{t+\Delta t}(Y)&:H\mapsto\mathcal A_{t+\Delta t}(YH^T+HY^T),
    \\
    h_{Y_t}&:H\mapsto Y_t^TH-H^TY_t,
    \\
    g'_{t+\Delta t}(Y)^*&:\lambda\mapsto2\mathcal{A}^*_{t+\Delta t}(\lambda)Y,
    \\
    h^*_{Y_t}&:\mu\mapsto 2Y_t\mu.
\end{align*} 
For later reference, observe that as a bilinear form $\nabla^2_Y\mathcal{L}_{Y_t,t+\Delta t}$ reads
\[
\nabla^2_Y\mathcal{L}_{Y_t,t+\Delta t}(\lambda)[H,H]=2\operatorname{trace}(H^T (C_{t+\Delta t}^{}-\mathcal A^*_{t+\Delta t}(\lambda))H).
\]
Solving~\eqref{eq: linear_system_step} for obtaining updates $(Y_t + \Delta Y,\lambda_t+\Delta \lambda,\mu_t + \Delta \mu)$ is equivalent to applying one step of Newton's method to the KKT system~\eqref{eq: KKT} (Lagrange--Newton method).

Our aim in this subsection is to show that for $\Delta t$ small enough the system~\eqref{eq: linear_system_step} is uniquely solvable when $(Y_t,\lambda_t)$ is a KKT-pair for the overparametrized problem~\eqref{eq: BM}. Since the system is continuous in $\Delta t$, we can do that by showing that it admits a unique solution for $\Delta t=0$. This corresponds to proving second-order sufficient conditions for the optimality of problem~\eqref{eq: BM_projected} for $\Delta t=0$. 
Interestingly, it is possible to relate this to standard regularity hypotheses on the original semidefinite problem~\eqref{eq: SDP}. For this we first need a uniqueness statement on the Lagrange multiplier $\lambda_t$.

\begin{lemma}
\label{lem: unique_lagr_mult}
Given an optimal solution $X_t^{}=Y_t^{}Y_t^T$ to~\eqref{eq: SDP}, suppose that $X_t$ is a unique (see consequence \ref{cons: unique}), primal nondegenerate (see Definition~\ref{def: non_deg} and assumption \ref{ass: non_deg}) solution. 
Then there is a unique optimal Lagrangian multiplier $\lambda_t$ for~\eqref{eq: BM} independent of the choice of $Y_t$ in the orbit $Y_t \mathcal O_r$. 
Moreover, $Z(\lambda^{}_t)=C^{}_t-\mathcal A_t^*(\lambda^{}_t)$ is the unique dual solution to~\eqref{eq: DSDP}.
\end{lemma}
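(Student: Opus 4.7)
The plan is to extract the unique multiplier directly from the KKT conditions of \eqref{eq: BM} and identify it with the unique dual solution of the original SDP, using primal non-degeneracy as the bridge.

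First, I would write out the stationarity condition of the Lagrangian for \eqref{eq: BM} at a KKT pair $(Y_t,\lambda_t)$. Because the horizontal constraint $h_{Y_t}$ is not part of \eqref{eq: BM} itself, stationarity reads
\[
    2\bigl(C_t - \mathcal{A}_t^*(\lambda_t)\bigr)Y_t = 0, \qquad \text{i.e.}\qquad Z(\lambda_t)\,Y_t = 0.
\]
This is the key algebraic relation I will exploit. Note that it depends on $Y_t$ only through $\operatorname{im} Y_t = \operatorname{im} X_t$, so it is invariant under $Y_t \mapsto Y_t Q$ with $Q\in\mathcal O_r$, which takes care of the orbit-independence claim as soon as uniqueness is established.

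Next, I would establish uniqueness via primal non-degeneracy. Suppose $\lambda_1,\lambda_2$ are two multipliers and set $\delta := \lambda_2 - \lambda_1$. From $Z(\lambda_i)Y_t=0$ we obtain $\mathcal{A}_t^*(\delta)Y_t = 0$. Using the characterization of the tangent space $\mathcal{T}_{X_t} = \{Y_tV^T+VY_t^T : V\in\mathbb{R}^{n\times r}\}$ from Definition~\ref{def: non_deg}, a direct computation gives
\[
    \bigl\langle \mathcal{A}_t^*(\delta),\, Y_tV^T+VY_t^T\bigr\rangle = 2\bigl\langle \mathcal{A}_t^*(\delta)Y_t,\, V\bigr\rangle = 0
\]
for every $V$, hence $\mathcal{A}_t^*(\delta)\perp \mathcal{T}_{X_t}$. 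On the other hand, $\mathcal{A}_t^*(\delta) \in \operatorname{im}\mathcal{A}_t^* = (\ker\mathcal{A}_t)^\perp$. By \ref{ass: non_deg}, $\ker\mathcal{A}_t + \mathcal{T}_{X_t} = \mathbb{S}^n$, so $\mathcal{A}_t^*(\delta)$ is orthogonal to all of $\mathbb{S}^n$, i.e.\ $\mathcal{A}_t^*(\delta) = 0$. Assumption~\ref{ass: licq} (surjectivity of $\mathcal{A}_t$, equivalently injectivity of $\mathcal{A}_t^*$) then yields $\delta = 0$.

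Finally, for existence and the identification with the dual solution, I would invoke the SDP side. Under \ref{ass: licq}--\ref{ass: cont_diff} strong duality holds and \eqref{eq: DSDP} admits an optimal $w^*$ with $Z(w^*)\succeq 0$ and, by complementarity, $Z(w^*)X_t = 0$. Since $Y_t$ has full column rank, $\operatorname{im} X_t = \operatorname{im} Y_t$, so $Z(w^*)Y_t=0$. Thus $w^*$ satisfies the stationarity equation of \eqref{eq: BM}, so a KKT multiplier for \eqref{eq: BM} exists and, by the uniqueness just proved, $\lambda_t = w^*$. Primal non-degeneracy is classically known to imply dual uniqueness for SDPs (see~\cite{Alizadeh1997}), so $Z(\lambda_t) = Z(w^*)$ is the unique dual optimum.

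The only subtle step is the second paragraph: translating $Z(\lambda_t)Y_t=0$ into the orthogonality $\mathcal{A}_t^*(\delta)\perp \mathcal{T}_{X_t}$ so that \ref{ass: non_deg} can be used; once this is in place, the rest is bookkeeping.
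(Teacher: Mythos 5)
Your proof is correct and follows essentially the same route as the paper: both deduce from stationarity that $Z(\lambda_t)Y_t=0$, reduce uniqueness to showing $\mathcal A_t^*(\delta)Y_t=0\Rightarrow\delta=0$ via primal non-degeneracy, and then identify $\lambda_t$ with the dual optimum through complementarity and the full column rank of $Y_t$. The only cosmetic difference is that the paper phrases the non-degeneracy step via the characterization $\mathcal T_{X_t}^\perp=\{M\in\mathbb S^n:MX_t=0\}$ and the equivalent condition $\operatorname{im}\mathcal A_t^*\cap\mathcal T_{X_t}^\perp=\{0\}$, whereas you verify the orthogonality $\mathcal A_t^*(\delta)\perp\mathcal T_{X_t}$ directly by the trace computation.
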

\begin{proof} 
We start by recalling that the optimal set for~\eqref{eq: BM} coincides with $Y_t\mathcal{O}_r$. Since the KKT conditions for~\eqref{eq: BM} are just
\[
    \nabla_Y f_t(Y) - \nabla_Y \langle \lambda, g_t(Y)\rangle = 2(C_t-\mathcal A_t^*(\lambda))Y = 0
\]
(and $g_t(Y) = 0$), 
the set of all optimal dual multipliers for~\eqref{eq: BM} is
\[
     \{\lambda \colon (C_t-\mathcal A_t^*(\lambda))Y_tQ = 0, Q\in\mathcal{O}_r\}
     =\{\lambda \colon (C_t-\mathcal A_t^*(\lambda))Y_t = 0\}
\]
To show that this set is a singleton, it suffices to prove that the homogeneous equation $\mathcal A^*_t(\lambda) Y^{}_t=0$ has only the zero solution.
By~\eqref{eq: primal_non_deg}, primal nondegeneracy for $X_t$ can read as
\[
    \operatorname{im}\mathcal A^*_t\cap\mathcal T^\perp_{X_t}=\{0\},
\]
where $\mathcal T^\perp_{X_t}=\{M\in\mathbb{S}^{n }\mid MX_t=0\}$. Noticing that $\mathcal A^*_t(\lambda) Y_t=0$ implies $A^*_t(\lambda)\in \operatorname{im}(\mathcal A^*_t)\cap\mathcal T^\perp_{X_t}$, we get that $\mathcal A^*_t(\lambda)=0$ and thus $\lambda=0$ since $\mathcal A_t^*$ is injective by assumption~\ref{ass: licq}. 
To prove the second statement, observe that by primal nondegeneracy~\eqref{eq: DSDP} has a unique solution $Z(w_t)$ corresponding, by assumption~\ref{ass: licq}, to a unique dual multipliers vector $w_t$ (see Theorem~7 in~\cite{Alizadeh1997}). Furthermore, $Z(w_t)$ satisfies $Z(w_t)X_t=\left(C^{}_{t}-\mathcal A^*_t(w_t)\right) Y^{}_tY_t^T=0$ by~\eqref{eq: KKT_SDP}. 
Since $Y_t$ has full column rank if $r$ is chosen equal to $r^* = \rank X_t$, this implies that $\left(C^{}_{t}-\mathcal A^*_t(w^{}_t)\right) Y^{}_t=0$. From the first statement it then follows that $w_t=\lambda_t$. 
\end{proof}
  
We can now state and prove the main result of this subsection.

\begin{theorem}
\label{thm: SOSC}
Let $(X^{}_t=Y_t^{}Y_t^T,Z^{}_t)$ be a strictly complementary (see Definition~\ref{def: strict_compl}) optimal primal-dual pair of solutions to~\eqref{eq: SDP}-\eqref{eq: DSDP} such that $X_t$ is a primal nondegenerate solution. Let $\lambda_t$ be the unique corresponding Lagrange multiplier for~\eqref{eq: BM} according to Lemma~\ref{lem: unique_lagr_mult}. Then the triple $(Y_t,\lambda_t,\mu_t=0)$ is a KKT triple for~\eqref{eq: BM_projected} at $\Delta t = 0$ (that is, $\mathcal F_{Y_t,t}(Y_t,\lambda_t,0) = 0$) and fulfills the second-order sufficient conditions: 
 \begin{equation}
 \label{eq:SOSC}
     \nabla^2_Y\mathcal{L}_{Y_t,t}(\lambda_t)[H,H]=\operatorname{trace}(H^T(C^{}_t-\mathcal{A}^*_t(\lambda_t))H)>0
 \end{equation}
 for all 
 $H\in\mathbb{R}^{n\times r}\setminus\{0\}$ satisfying 
 $\mathcal A^{}_{t}(Y^{}_tH^T+HY_t^T)=0$ and $Y_t^TH-HY_t^T=0$. 
In particular, $\mathcal{J}_{Y_t,t}(Y_t,\lambda_t,0)$ is invertible. 
\end{theorem}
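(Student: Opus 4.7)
The plan is to prove the theorem in three installments: first check that $(Y_t,\lambda_t,0)$ satisfies $\mathcal{F}_{Y_t,t}(Y_t,\lambda_t,0)=0$, then establish the strict positivity~\eqref{eq:SOSC}, and finally deduce invertibility of $\mathcal{J}_{Y_t,t}(Y_t,\lambda_t,0)$ by showing its kernel is trivial. The KKT equations fall out in a line: primal feasibility $\mathcal{A}_t(Y_t Y_t^T)-b_t=0$ and the horizontal block $Y_t^T Y_t-Y_t^T Y_t=0$ are immediate, while the stationarity block $2(C_t-\mathcal{A}_t^*(\lambda_t))Y_t=2Z_t Y_t$ vanishes because complementary slackness $Z_t X_t=0$ from~\eqref{eq: KKT_SDP} yields $Z_t Y_t Y_t^T=0$, and $Y_t$ has full column rank, so right-multiplication by $Y_t(Y_t^T Y_t)^{-1}$ gives $Z_t Y_t=0$.

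The heart of the proof is the SOSC, which I would establish by contradiction. Since $Z_t\succeq 0$, the quadratic form $\operatorname{trace}(H^T Z_t H)=\|Z_t^{1/2}H\|_F^2$ is non-negative and vanishes iff $Z_t H=0$. By strict complementarity, $\operatorname{ker} Z_t=\operatorname{im} X_t=\operatorname{im} Y_t$, so any critical $H$ with vanishing Hessian must have the form $H=Y_t B$ for some $B\in\mathbb{R}^{r\times r}$. Writing $B=B_s+B_a$ as symmetric plus skew, the linearized equality constraint $\mathcal{A}_t(Y_t H^T+H Y_t^T)=0$ reduces to $\mathcal{A}_t(Y_t B_s Y_t^T)=0$. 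A short orthogonality computation, using strict complementarity once more, identifies $\mathcal{T}_{Z_t}^\perp=\{Y_t S Y_t^T:S\in\mathbb{S}^r\}$, so $Y_t B_s Y_t^T\in\mathcal{T}_{Z_t}^\perp$; the dual non-degeneracy of $Z_t$ from assumption~\ref{ass: non_deg}, equivalent to the injectivity of $\mathcal{A}_t$ on $\mathcal{T}_{Z_t}^\perp$, then forces $Y_t B_s Y_t^T=0$, hence $B_s=0$. The remaining horizontal constraint $Y_t^T H=H^T Y_t$ reduces to the homogeneous Lyapunov identity $(Y_t^T Y_t) B_a+B_a (Y_t^T Y_t)=0$ with positive definite coefficient, forcing $B_a=0$ by the eigenvalue argument already used in Proposition~\ref{prop: injectivity of phi}. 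Thus $H=0$, a contradiction.

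For invertibility I would show $\operatorname{ker}\mathcal{J}_{Y_t,t}(Y_t,\lambda_t,0)=\{0\}$. Given $(H,\Delta\lambda,\Delta\mu)$ in the kernel, pairing the first block equation with $H$ in the Frobenius inner product collapses both multiplier terms: $\langle \mathcal{A}_t^*(\Delta\lambda)Y_t,H\rangle=\tfrac{1}{2}\langle \Delta\lambda,\mathcal{A}_t(Y_t H^T+H Y_t^T)\rangle=0$ by the second block equation together with the symmetry of $\mathcal{A}_t^*(\Delta\lambda)$, while $\langle Y_t\Delta\mu,H\rangle=\operatorname{trace}(\Delta\mu^T Y_t^T H)=0$ by the symmetry of $Y_t^T H$ (third block equation) and the skew-symmetry of $\Delta\mu$. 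What remains is $\operatorname{trace}(H^T Z_t H)=0$, so the SOSC step gives $H=0$. The first equation then reads $\mathcal{A}_t^*(\Delta\lambda)Y_t+Y_t\Delta\mu=0$; left-multiplying by $Y_t^T$ and extracting the skew-symmetric part in $\mathbb{R}^{r\times r}$ yields $(Y_t^T Y_t)\Delta\mu+\Delta\mu (Y_t^T Y_t)=0$, so $\Delta\mu=0$ by Lyapunov, and then $\mathcal{A}_t^*(\Delta\lambda)Y_t=0$ forces $\Delta\lambda=0$ by the argument of Lemma~\ref{lem: unique_lagr_mult} (primal non-degeneracy combined with~\ref{ass: licq}). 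The main obstacle is the identification $\mathcal{T}_{Z_t}^\perp=\{Y_t S Y_t^T:S\in\mathbb{S}^r\}$ together with the appeal to \emph{dual} non-degeneracy: along the directions $H=Y_t B$ the Hessian $Z_t$ is entirely degenerate, so strict complementarity alone cannot rule them out, and both halves of~\ref{ass: non_deg} are essential to clinch the argument.
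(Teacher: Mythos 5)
Your proof is correct and reaches the same conclusion, but it does take a genuinely different route through the SOSC part. Both proofs use strict complementarity to conclude that a critical direction $H$ with $\operatorname{trace}(H^T Z_t H)=0$ must be of the form $H=Y_tB$. From there the paper proceeds by a perturbation argument: it forms $\tilde X=X_t+s(Y_tH^T+HY_t^T)$, shows it is feasible with objective $\le\langle C_t,X_t\rangle$, invokes uniqueness of the primal solution to force $Y_tH^T+HY_t^T=0$, and then finishes by Proposition~\ref{prop: injectivity Dphi} (using $H\in\mathcal H_{Y_t}$). You instead split $B=B_s+B_a$, identify $\mathcal T_{Z_t}^\perp=\{Y_tSY_t^T:S\in\mathbb S^r\}$, and invoke dual non-degeneracy directly (injectivity of $\mathcal A_t$ on $\mathcal T_{Z_t}^\perp$) to kill $B_s$, then the horizontal constraint and a Lyapunov argument to kill $B_a$. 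These routes rely on the same underlying regularity: under strict complementarity, dual non-degeneracy is equivalent to primal uniqueness, so where the paper says ``$X_t$ is the unique solution'' you say ``$\mathcal A_t$ is injective on $\mathcal T_{Z_t}^\perp$.'' Your framing has the virtue of making explicit that the theorem as literally stated (which lists only strict complementarity and \emph{primal} non-degeneracy) actually needs dual non-degeneracy as well; the paper's proof uses this implicitly via primal uniqueness, and the assumption is in force in the paper because (A2) assumes both halves. Your final step, a direct verification that $\ker\mathcal J_{Y_t,t}(Y_t,\lambda_t,0)=\{0\}$, is a self-contained version of the claim the paper simply cites (Theorem~12.6 of Nocedal--Wright), and the $\Delta\mu=0$ Lyapunov step and the $\Delta\lambda=0$ step via Lemma~\ref{lem: unique_lagr_mult} are both correct. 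Overall a valid, somewhat more algebraic alternative to the paper's argument.
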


\begin{proof}
Since $(C_t-\mathcal A^*(\lambda_t))Y_t = Z(\lambda_t) Y_t =  0$ by the KKT conditions for~\eqref{eq: BM} and $h_{Y_t}(Y_t) = 0$, it is obvious that $\mathcal F_{Y_t,t}(Y_t,\lambda_t,0) = 0$. It is well-known that the linearized KKT system~\eqref{eq: linear_system_step} admits a unique solution if (and only if) the second-order sufficient conditions~\eqref{eq:SOSC} hold; see e.g.,~\cite[ Lemma 16.1]{Nocedal2006}. Since $(X_t
, Z(\lambda_t))$ is an optimal solution for the original primal-dual pair of
SDPs, and it hence satisifies the second-order necessary conditions for optimality (that is, $Z(\lambda_t)\succeq0$),~\eqref{eq:SOSC} holds with~``$\geq$''.
Assume that 
\[
\operatorname{trace}(H^T(C^{}_t-\mathcal{A}_t^*(\lambda^{}_t))H)=\operatorname{trace}(H^TZ(\lambda^{}_t)H)=0
\]
for some $H\in\mathbb{R}^{n\times r}$ satisfying $\mathcal A_{t}(Y_tH^T+HY_t^T)=0$ and $Y_t^TH-H^TY^{}_t=0$.
Since $Z_t = Z(\lambda_t)$ is positive semidefinite, the columns of $H$ must belong to the kernel of $Z(\lambda_t)$. 
By strict complementarity they hence belong to the column space of $X_t$, which is equal to the column space of $Y_t$. 
Therefore $H=Y_tP$ for some matrix $P\in\mathbb{R}^{r\times r}$.
Consider now the matrix
 \begin{equation*}
     \label{eq: SOSC_contra}
      \tilde  X=X^{}_t+s(Y^{}_tH^T+HY_t^T)= Y^{}_t[I^{}_r+s(P^T+P)]Y_t^T,
 \end{equation*}
depending on a real parameter $s$. 
Clearly, $\mathcal{A}_t(\tilde X)=b_t$ and, for nonzero $|s|$ small enough, $\tilde X$ is positive semidefinite. Furthermore, for a suitable choice of the sign of $s$, we have $\langle C_t,\tilde X\rangle\le\langle C_t,X_t\rangle$. Since $X_t$ is the unique solution of~\eqref{eq: SDP}, this implies $\tilde X=X_t$ and thus $Y^{}_tH^T+HY_t^T$ must be zero. 
Since $H\in\mathcal{H}_{Y_t}$ Proposition~\ref{prop: injectivity Dphi} yields $H=0$, and this completes the proof.
\end{proof}
\begin{corollary}
Let the assumptions of Theorem~\ref{thm: SOSC} be satisfied. Then for $\Delta t>0$ small enough (and depending on $Y_t$) system~\eqref{eq: linear_system_step}, that is, operator $\mathcal{J}_{Y_t,t+\Delta t}(Y_t,\lambda_t,0)$, is invertible.
\end{corollary}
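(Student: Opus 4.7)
The plan is to leverage Theorem~\ref{thm: SOSC} together with a standard continuity/openness argument. First, I would observe that the operator $\mathcal{J}_{Y_t,t+\Delta t}(Y_t,\lambda_t,0)$, viewed as a linear map on the finite-dimensional space $\mathbb R^{n\times r}\times\mathbb R^m\times\mathbb S^r_{skew}$, depends on $\Delta t$ only through the data $C_{t+\Delta t}$ and $\mathcal A_{t+\Delta t}$ (since $Y_t$, $\lambda_t$, and $h_{Y_t}$ are fixed). By assumption~\ref{ass: cont_diff}, these data are continuous (indeed continuously differentiable) in $\Delta t$, so the entries of the block operator depend continuously on $\Delta t$ in any matrix norm.

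Next, I would invoke Theorem~\ref{thm: SOSC}, which shows that at $\Delta t=0$ the operator $\mathcal{J}_{Y_t,t}(Y_t,\lambda_t,0)$ is invertible. Since the set of invertible linear maps on a finite-dimensional space is open (for instance, because the determinant is a continuous function of the matrix entries and is nonzero precisely on invertible maps), and since $\Delta t\mapsto \mathcal{J}_{Y_t,t+\Delta t}(Y_t,\lambda_t,0)$ is a continuous curve in the space of linear operators, invertibility persists in a neighborhood of $\Delta t=0$. Hence there exists $\Delta t_0>0$ such that for all $\Delta t\in[0,\Delta t_0)$ the operator is invertible.

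There is essentially no obstacle here: the content has already been proved in Theorem~\ref{thm: SOSC}, and only a soft perturbation argument is needed. The only mildly delicate point worth mentioning is that the size of the allowed perturbation $\Delta t_0$ is not uniform in $t$ or $Y_t$ -- it depends on the smallest singular value of the operator at $\Delta t=0$, which in turn depends on $t$ and the chosen representative $Y_t$ in the orbit. This is reflected in the statement of the corollary, which explicitly says ``depending on $Y_t$.'' One could additionally note that since the data and the solution curve are continuously differentiable and $Y_t$ can be chosen smoothly along the path (by Theorem~\ref{thm: minimum time interval}), a uniform lower bound on $\Delta t_0$ over a compact subinterval of $[0,T]$ can be obtained by a standard compactness argument, though this is not needed for the corollary as stated.
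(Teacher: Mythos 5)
Your proposal is correct and matches the paper's own reasoning: the paper explicitly says, just before Theorem~\ref{thm: SOSC}, that ``since the system is continuous in $\Delta t$'' invertibility for small $\Delta t$ follows from invertibility at $\Delta t=0$, which is exactly your continuity-plus-openness argument. Your additional remarks on the dependence on $Y_t$ and the possibility of a uniform bound over compact subintervals are sound but not needed.
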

Clearly, this is only a qualitative result. An upper bound for feasible $\Delta t$ could be expressed in terms of the spectral norm of the inverse of $\mathcal{J}_{Y_t,t}(Y_t,\lambda_t,0)$ using perturbation arguments. This would require a lower bound on the absolute value of the eigenvalues of $\mathcal{J}_{Y_t,t}(Y_t,\lambda_t,0)$. In this context, we should clarify that the eigenvalues, and hence also the condition number of $\mathcal{J}_{Y_t,t + \Delta t}(Y_t,\lambda_t,0)$ (for sufficiently small $\Delta t$ as above), do not depend on the particular choice of $Y_t$ in the orbit $Y_t \mathcal O_r$. This is obviously also relevant from a practical perspective. To see this, note that as a bilinear form (on $\mathbb R^{n \times r} \times \mathbb R^m \times \mathbb S^r_{skew}$) $\mathcal{J}_{Y_t,t+\Delta t}(Y,\lambda,\mu)$ reads
\begin{equation*}
\label{eq: J as bilinear form}
\begin{aligned}
&\mathcal{J}_{Y_t,t+\Delta t}(Y,\lambda,\mu)[(H,\Delta \lambda,\Delta \mu),(H,\Delta \lambda,\Delta \mu)]
\\ {}={} &\operatorname{trace}(H^T (C_{t+\Delta t}^{}-\mathcal A^*_{t+\Delta t}(\lambda))H) - 2\langle \Delta \lambda, \mathcal A_{t+\Delta t}(YH^T+HY^T)\rangle - 2 \langle \Delta \mu , Y_t^TH-H^TY_t \rangle.
\end{aligned}
\end{equation*}
For any fixed $Q \in \mathcal O_r$ one therefore has
\[
\begin{aligned}
&\mathcal{J}_{Y_t,t+\Delta t}(Y_t,\lambda_t,0)[(H,\Delta \lambda,\Delta \mu),(H,\Delta \lambda,\Delta \mu)] \\
{}={} &\mathcal{J}_{Y_tQ,t+\Delta t}(Y_tQ,\lambda_t,0)[\mathcal T_Q (H,\Delta \lambda,\Delta \mu),\mathcal T_Q (H,\Delta \lambda,\Delta \mu)]
\end{aligned}
\]
with the unitary linear operator $\mathcal T_Q (H,\Delta \lambda,\Delta \mu) = (HQ, \Delta \lambda, Q^T \Delta \mu Q)$ on $\mathbb R^{n \times r} \times \mathbb R^m \times \mathbb S^r_{skew}$. It follows that $\mathcal{J}_{Y_t,t+\Delta t}(Y_t,\lambda_t,0)$ and $\mathcal{J}_{Y_tQ,t+\Delta t}(Y_tQ,\lambda_t,0)$ have the same eigenvalues.  

However, our proof of Theorem~\ref{thm: SOSC} is by contradiction and hence does not provide an obvious lower bound on the radius of invertibility of $\mathcal J_{Y_t,t}(Y_t,\lambda_t,0)$. Here we do not intend to investigate this in more depth. In the error analysis conducted later we will essentially assume to have such a bound available (cf.~Lemma~\ref{lem: bounded inverses}).

\subsection{A path-following predictor-corrector algorithm}
\label{subsec: path_foll_algorithm}

We now thoroughly describe the path-following predictor-corrector algorithm that we propose for tracking the trajectory of solutions to~\eqref{eq: SDP}. It includes an optional adaptive step size tuning step which is based on measuring the \emph{residual} of the optimality conditions, defined as 
\begin{equation}
\label{residual}
    \operatorname{res}_t(Y,\lambda):=
    \left \|\begin{array}{c}
    2[C_{t}  - \mathcal A^*_{t}(\lambda)] Y
    \\ 
    \mathcal A_{t}(YY^T) - b_{t}
    \end{array}\right\|_{\infty}.
\tag{RES}
\end{equation} 
 The residual expresses the maximal component-wise violation of the optimality KKT conditions for the problem~\eqref{eq: BM} and is therefore a suitable error measure. Indeed (see, e.g.,~\cite[Theorems 3.1 and 3.2]{Wright2005}),
if the second-order sufficiency condition for optimality holds at $(Y_t,\lambda_t)$, then there are constants $\eta,C_1,C_2>0$ such that for all $(Y, \lambda)$ with $\|(Y, \lambda)-(Y_t,\lambda_t)\| \le \eta$ one has
\[
    C_1\|(Y, \lambda)-(Y_t,\lambda_t)\|\le\operatorname{res}_t(Y, \lambda)\le C_2\|(Y, \lambda)-(Y_t,\lambda_t)\|.
\]
Here and in the following, we we use the norm $\|(Y,\lambda)\|^2 = \| Y\|_F^2 + \| \lambda \|^2$. 

The overall procedure is displayed as Algorithm~\ref{alg: 1} below. Given a TV-SDP of the form \eqref{eq: SDP}, parameterized over a time interval $[0,T]$, the inputs are an approximate initial primal-dual solution pair $(\hat X_0, Z(\hat\lambda_0))$ to (SDP$_0$)--(D-SDP$_0$) and an initial step size $\Delta t_0$. At each iteration the current iterate is used to construct the linear system~\eqref{eq: linear_system_step}, which is then solved, returning the updates $\Delta Y$ and $\Delta\lambda$. 
The presented version of the algorithm also includes a procedure for tuning the step size that can be activated through the Boolean variable \texttt{step size\_TUNING} and is supposed to ensure that the residual threshold is satisfied at every time step. Specifically, if for a time step the threshold is violated, the step size is reduced by a factor $\gamma_1\in(0,1)$ and a more accurate solution is obtained by solving the linearized KKT system \eqref{eq: linear_system_step} for the reduced time step. On the other hand, to avoid unnecessary small steps, the step size is increased after every successful step by a factor $\gamma_2 > 1$ (but is never made larger than $\Delta t_0$).  If the step size tuning is deactivated, the algorithm just runs with the constant step size $\Delta t_0$ instead. 
Note that Algorithm \ref{alg: 1} tracks both the primal solution $X_t$ and the dual solution $Z_t=C_t-\mathcal{A}^*_t(\lambda_t)$.
 
\begin{algorithm}
\caption{Path-following predictor-corrector for~\eqref{eq: SDP} with $t\in[0,T]$}
\label{alg: 1}
        \textbf{Input:} an initial approximate primal-dual solution $(\hat X_0,Z(\hat \lambda_0))$ to (SDP$_0$)--(D-SDP$_0$)  
        \\ 
        initial step size $\Delta t_0$
        \\
        boolean variable \texttt{step size\_TUNING}
        \\
        step size tuning parameters $\gamma_1\in(0,1)$, $\gamma_2>1$
        \\ 
        residual tolerance $\epsilon>0$
        \\ 
        \textbf{Output:} solutions $\{\hat X_k\}_{k=0,\dots,K}$ to~\eqref{eq: SDP} for $t\in\{0,\dots,t_k,\dots,T\}$
        \begin{algorithmic}[1]
        \STATE $k\xleftarrow{}0$
        \STATE $t_0\xleftarrow{}0$
        \STATE $\Delta t\xleftarrow{}\Delta t_0$
        \STATE $S=\{\hat X_0\}$, $r = \rank(\hat X_0)$
        \STATE find $\hat Y_0\in\mathbb{R}^{n\times r}$ such that $\hat Y^{}_0\hat Y_0^T=\hat X^{}_0$
        \WHILE{$t_k< T $}{
        \STATE solve linear system~\eqref{eq: linear_system_step} with data $\Delta t,t_k,\hat Y_k,\hat \lambda_k$
        and obtain $\Delta Y, \Delta\lambda$
        \IF{\texttt{step size\_TUNING} \AND $\operatorname{res}_{\hat Y_k,t_{k}+\Delta t}(\hat Y_k+\Delta Y,\hat \lambda_k+\Delta\lambda)>\epsilon$}
        \STATE $\Delta t\xleftarrow{}\gamma_1\Delta t$
        \STATE go back to step 6
        \ENDIF
        \STATE $(t_{k+1},\hat Y_{k+1},\hat \lambda_{k+1} )\xleftarrow{}(t_{k}+\Delta t,\hat Y_k+\Delta Y,\hat \lambda_k+\Delta\lambda)$
        \STATE append $\hat X_{k+1} ={} \hat Y^{}_{k+1}\hat Y_{k+1}^T$ to $S$
        \IF{\texttt{step size\_TUNING}}
        \STATE $\Delta t\xleftarrow{}\min(T-t_{k+1},\gamma_2\Delta t,\Delta t_0)$ 
        \ELSE
        \STATE $\Delta t\xleftarrow{}\min(T-t_{k+1},\Delta t)$
        \ENDIF
        \STATE  $k\xleftarrow{}k+1$}
        \ENDWHILE
        \RETURN $S$ 
        \end{algorithmic} 
\end{algorithm}

\subsection{Error analysis}
\label{subsec: err_analysis}

We investigate the algorithm without step size tuning. 
The main goal of the following error analysis to show that the computed $(\hat X_k, \hat \lambda_k )$, where $\hat X_k = \hat Y_k \hat Y_k^T$, remain close to the exact solutions $(X_{t_k},\lambda_{t_k})$, if properly initialized. 
The logic of the proof is similar to standard path following methods based on Newton's method, e.g.~\cite{Diehl2012}. The specific form of our problem requires some additional considerations that allow for more precise quantitative bounds depending on the problem constants. 

Throughout this section, $(X^{}_t=Y_t^{}Y_t^T,Z^{}_t)$ is an optimal primal-dual pair of solutions to \eqref{eq: SDP}--\eqref{eq: DSDP} satisfying the five assumptions \ref{ass: strict_feas}--\ref{ass: cont_diff}, so that it is strictly complementary (see Definition~\ref{def: strict_compl}) and such that $X_t$ is primal nondegenerate.
Notice that the choice of factor $Y_t$ can be arbitrary, since it does not affect any of the subsequent statements.
In Lemma~\ref{lem: unique_lagr_mult} and its proof, we have seen that for every $X_t$ the unique Lagrange multiplier $\lambda_t$ satisfies $Z_t =  C_t - \mathcal A^*_t(\lambda_t)$, that is,
\[
    \lambda_t = (A^*_t)^\dagger(Z_t - C_t)
\]
with $(A^*_t)^\dagger$ being the pseudo-inverse of $A^*_t$. By assumption \ref{ass: cont_diff}, $C_t$ and $\mathcal A^*_t$ depend smoothly on $t$ and so does $(\mathcal{A}_t^*)^\dagger$, since $\mathcal A_t$ is surjective for all $t$ by assumption \ref{ass: licq}. Also, by Theorem~\ref{thm:single_valued_differentiable}, $t \mapsto Z_t$ is smooth. Therefore the curve $t \mapsto \lambda_t$ is smooth. Since the algorithm operates in the $(Y,\lambda)$ space, our implicit goal is to show that the iterates stay close to the set 
\[
\mathcal C := \{ (Y_t,\lambda_t) \mid \text{$(Y_t^{} Y_t^T,Z(\lambda_t))$ is an optimal primal-dual pair to~\eqref{eq: SDP}--\eqref{eq: DSDP}}, \ t \in [0,T] \}
\]
containing the optimal primal-dual trajectories in the Burer--Monteiro factorization.

\begin{lemma}
The set $\mathcal C$ is compact.
\end{lemma}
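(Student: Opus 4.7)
The plan is to verify that $\mathcal{C}$ is closed and bounded as a subset of the finite-dimensional space $\mathbb{R}^{n\times r}\times\mathbb{R}^m$, from which compactness follows. Throughout we use that, by~\ref{cons: unique} and the discussion preceding the lemma, the curves $t\mapsto X_t$ and $t\mapsto\lambda_t = (\mathcal{A}^*_t)^\dagger(Z_t-C_t)$ are continuous (in fact $C^1$) on the compact interval $[0,T]$.

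For boundedness, observe that every $(Y,\lambda)\in\mathcal{C}$ is of the form $(Y_t Q,\lambda_t)$ for some $t\in[0,T]$ and $Q\in\mathcal{O}_r$, with $Y_t Y_t^T = X_t$. Then
\[
    \|YQ\|_F^2 = \operatorname{tr}(YQQ^TY^T)=\operatorname{tr}(X_t),
\]
and both $\operatorname{tr}(X_t)$ and $\|\lambda_t\|$ are bounded by continuity on $[0,T]$.

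For closedness, I would take a convergent sequence $(Y_k,\lambda_k)\to(Y^*,\lambda^*)$ with $(Y_k,\lambda_k)\in\mathcal{C}$, so that $\lambda_k=\lambda_{t_k}$ and $Y_k^{}Y_k^T = X_{t_k}$ for some $t_k\in[0,T]$. By compactness of $[0,T]$, pass to a subsequence with $t_k\to t^*\in[0,T]$. Continuity of the trajectories gives $\lambda^*=\lambda_{t^*}$ and $Y^*(Y^*)^T = \lim X_{t_k}=X_{t^*}$. Since $\lambda_r(X_{t^*})\ge\lambda_*>0$ by~\eqref{eq: small_eig_bound}, the matrix $Y^*$ has full column rank $r$, hence $Y^*\in Y_{t^*}\mathcal{O}_r$, proving $(Y^*,\lambda^*)\in\mathcal{C}$.

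The only point requiring mild care is the last identification $Y^*\in Y_{t^*}\mathcal{O}_r$: this is the standard fact that any two full-column-rank factorizations of the same PSD matrix differ by an element of $\mathcal{O}_r$, which has already been used (implicitly) throughout section~\ref{sec: quotient_geometry}. No deeper obstacle arises, since we are not asserting any continuous choice of $Y_t$ across $t$, only the closedness of the union of orbits.
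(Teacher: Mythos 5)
Your proof is correct and follows essentially the same approach as the paper: boundedness via $\|Y\|_F^2=\operatorname{tr}(X_t)$ and continuity of $t\mapsto\lambda_t$, closedness via passing to a convergent subsequence $t_k\to t^*$ and continuity of the trajectories. If anything your version is slightly more explicit than the paper's, which first reduces to the projection $\mathcal C_Y=\{Y_t\}$ and treats the $\lambda$-component somewhat cursorily, whereas you argue directly on the pairs $(Y,\lambda)$.
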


\begin{proof}
As the curve $t \mapsto \lambda_t$ is continuous, it suffices to prove that the set $\mathcal C_Y = \{ Y_t \mid  t \in [0,T]  \}$ is compact. Since $\| Y_t \|_F = \sqrt{\tr(X_t)}$ and $t \mapsto X_t$ is smooth, it is bounded. To see that the set is closed, let $(Y_n) \subset \mathcal C_Y$ be a convergent sequence with limit $Y$ such that $Y_n^{} Y_n^T = X_{t_n}$ for some $t_n \in [0,T] $. By passing to a subsequence, we can assume $t_n \to t \in [0,T] $. Then obviously $X_t = Y Y^T$, which shows that $Y$ is in the set.
\end{proof}

We consider the norm on $\mathbb R^{n \times r} \times \mathbb R^m \times \mathbb S^r_{skew}$ defined by $\|(Y,\lambda,\mu)\|^2 = \| Y \|_F^2 + \| \lambda \|^2 + \| \mu \|_F^2$. The induced operator norm is denoted $\| \cdot \|_{op}$. 

\begin{lemma}\label{lem: bounded inverses}
There exists a constant $m > 0$ such that
\begin{equation}\label{eq: bounded inverse}
\| \mathcal J_{Y_t,t}(Y_t,\lambda_t,0)^{-1} \|_{op} \le \frac{1}{m}
\end{equation}
for all $(Y_t,\lambda_t) \in \mathcal C$.
\end{lemma}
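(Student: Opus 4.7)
The plan is to combine the pointwise invertibility supplied by Theorem~\ref{thm: SOSC} with the compactness of $\mathcal C$ and the continuity of the KKT operator in its data, and then invoke the classical principle that a continuous, finite-valued function on a compact set is uniformly bounded.

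First I would augment $\mathcal C$ to the compatible-triple set
\[
    \mathcal C^+ := \{(Y,\lambda,t)\in\mathcal C\times[0,T] \,:\, YY^T = X_t,\ \lambda=\lambda_t\},
\]
which is closed, hence compact, in $\mathbb R^{n\times r}\times\mathbb R^m\times [0,T]$ by continuity of the curves $t\mapsto X_t$ (consequence~\ref{cons: unique}) and $t\mapsto\lambda_t$ (established immediately before the lemma). Inspecting the block form of $\mathcal J_{Y_t,t}(Y_t,\lambda_t,0)$ recorded in subsection~\ref{subsec: linar_kkt}, each block is an elementary continuous function of $Y$, $C_t$, $\mathcal A^*_t(\lambda)$, and $Y_t$, so $(Y,\lambda,t)\mapsto \mathcal J_{Y,t}(Y,\lambda,0)$ is continuous on $\mathcal C^+$ into the finite-dimensional space of linear operators on $\mathbb R^{n\times r}\times\mathbb R^m\times\mathbb S^r_{skew}$.

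Next I would invoke Theorem~\ref{thm: SOSC} at every point of $\mathcal C^+$ to obtain invertibility of $\mathcal J_{Y,t}(Y,\lambda,0)$; note that the qualitative $\Delta t>0$ statement in the subsequent corollary is not needed here, since we evaluate at $\Delta t=0$. Continuity of matrix inversion on the open set of invertible operators then yields that $(Y,\lambda,t)\mapsto \|\mathcal J_{Y,t}(Y,\lambda,0)^{-1}\|_{op}$ is continuous on $\mathcal C^+$, and by compactness it attains a finite maximum $M$. Setting $m:=1/M>0$, and noting that projection onto the first two coordinates maps $\mathcal C^+$ onto $\mathcal C$, yields the stated bound on all of $\mathcal C$.

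The only real bookkeeping issue is that the operator $\mathcal J_{Y_t,t}(Y_t,\lambda_t,0)$ depends on $t$ in addition to the pair $(Y_t,\lambda_t)$, which is handled cleanly via $\mathcal C^+$; alternatively one could use the unitary-equivalence remark after Theorem~\ref{thm: SOSC} to reduce to a single continuous choice $t\mapsto Y_t$ and work directly on the compact interval $[0,T]$. I do not foresee a genuine analytic obstacle: the content of the argument is simply that Theorem~\ref{thm: SOSC} supplies invertibility pointwise and compactness upgrades this to a uniform bound.
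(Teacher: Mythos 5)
Your proposal is correct and follows essentially the same argument as the paper: invoke Theorem~\ref{thm: SOSC} for pointwise invertibility, continuity of the inverse-norm map, and compactness of $\mathcal C$ (established in the preceding lemma) to obtain a uniform bound. Your $\mathcal C^+$ bookkeeping is a minor but genuine tightening of the paper's terser presentation, which writes $\mathcal J(Y,\lambda,0)$ as if it depended only on $(Y,\lambda)$ even though the operator also depends on $t$ through $\mathcal A_t$ and $C_t$; passing to the compact graph over $[0,T]$ makes the continuity-on-a-compactum step unambiguous.
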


\begin{proof}
On its open domain of definition, the map $(Y,\lambda) \mapsto \| \mathcal J(Y,\lambda,0)^{-1}\|_{op}$ is continuous. By Theorem~\ref{thm: SOSC}, the compact set $\mathcal C$ is contained in that domain. Therefore, $\| \mathcal J(Y,\lambda,0)^{-1}\|_{op}$ achieves its maximum on $\mathcal C$.
\end{proof}

\begin{lemma}
For any $t \in [0,T] $ and $\hat Y \in \mathbb R^{n \times r}$, the mapping $(Y,\lambda,\mu) \mapsto \mathcal J_{\hat Y,t}(Y,\lambda,\mu)$ is Lipschitz continuous in the operator norm on $\mathbb R^{n \times r} \times \mathbb R^m \times \mathbb S^r_{skew}$. Specifically,
\[
\| \mathcal J_{\hat Y,t}(Y_1,\lambda_1,\mu_1) - \mathcal J_{\hat Y,t}(Y_2, \lambda_2,\mu_2 ) \|_{op} \le 12 \sqrt3 \| \mathcal A_t \| \| (Y_1, \lambda_1, \mu_1)  - (Y_2,\lambda_2,\mu_2 )\|
\]
for all $(Y_1,\lambda_1,\mu_1)$ and $(Y_2,\lambda_2,\mu_2)$, 
where $\| \mathcal A_t \|$ is the operator norm of $\mathcal A_t$.
\end{lemma}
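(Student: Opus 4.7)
The plan is to exploit the very simple dependence of $\mathcal J_{\hat Y,t}(Y,\lambda,\mu)$ on its arguments. Inspecting the block representation given above, I observe that (i) $\mu$ does not appear in $\mathcal J$ at all; (ii) the third column $-h_{\hat Y}^*$ and bottom row $-h_{\hat Y}$ depend only on $\hat Y$ and hence cancel in the difference; (iii) the Hessian block $\nabla_Y^2\mathcal L_t(\lambda)\colon H\mapsto 2(C_t-\mathcal A_t^*(\lambda))H$ is affine in $\lambda$ and independent of $Y$; and (iv) the off-diagonal $g'_t(Y)$ and $g'_t(Y)^*$ blocks are linear in $Y$ and independent of $\lambda$. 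Writing $\delta Y=Y_1-Y_2$ and $\delta\lambda=\lambda_1-\lambda_2$, the difference $\mathcal J_{\hat Y,t}(Y_1,\lambda_1,\mu_1)-\mathcal J_{\hat Y,t}(Y_2,\lambda_2,\mu_2)$ therefore acts on a test triple $(H,\Delta\lambda,\Delta\mu)$ as
\[
\bigl(-2\mathcal A_t^*(\delta\lambda)H - 2\mathcal A_t^*(\Delta\lambda)\,\delta Y,\ -\mathcal A_t(\delta Y\, H^T + H\,\delta Y^T),\ 0\bigr),
\]
which already reduces the problem to bounding a small number of quadratic terms in the inputs.

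I would then bound each surviving output component using just the triangle inequality together with two elementary facts: the adjoint identity $\|\mathcal A_t^*(v)\|_F\le\|\mathcal A_t\|\,\|v\|$ (from $\|\mathcal A_t^*\|_{op}=\|\mathcal A_t\|$) and Frobenius submultiplicativity $\|AB\|_F\le\|A\|_F\|B\|_F$. This yields
\[
\|\text{first block}\|_F\le 2\|\mathcal A_t\|\bigl(\|\delta\lambda\|\,\|H\|_F + \|\Delta\lambda\|\,\|\delta Y\|_F\bigr),\qquad \|\text{second block}\|\le 2\|\mathcal A_t\|\,\|\delta Y\|_F\,\|H\|_F,
\]
while the third block is identically zero.

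To pass from these pointwise estimates to the operator norm of the difference, I would square and sum, then apply Cauchy--Schwarz in the form $(xa+yb)^2\le(x^2+y^2)(a^2+b^2)$ together with the unit-norm constraint $\|H\|_F^2+\|\Delta\lambda\|^2+\|\Delta\mu\|_F^2\le 1$. A short manipulation produces $\|\mathcal J_1-\mathcal J_2\|_{op}^2\le C^2\|\mathcal A_t\|^2\bigl(\|\delta Y\|_F^2+\|\delta\lambda\|^2\bigr)$ for an absolute constant $C$, and the right-hand side is automatically bounded by $C^2\|\mathcal A_t\|^2\|(\delta Y,\delta\lambda,\delta\mu)\|^2$, because the $\|\delta\mu\|_F^2$ term contributes nonnegatively to the norm but does not appear on the left. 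The asserted constant $12\sqrt 3$ is very generous; the natural calculation in fact yields $C=2\sqrt 2$.

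There is no serious obstacle here: the main thing to watch is the bookkeeping of Frobenius-versus-operator norms in chains such as $\|\mathcal A_t^*(\lambda)H\|_F\le\|\mathcal A_t^*(\lambda)\|_F\|H\|_F\le\|\mathcal A_t\|\|\lambda\|\|H\|_F$ (using implicitly $\|A\|_{op}\le\|A\|_F$), together with the pleasant observation that the complete absence of $\mu$ from $\mathcal J_{\hat Y,t}$ lets the $\|\delta\mu\|_F$ contribution on the right-hand side come along for free.
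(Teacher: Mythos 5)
Your proof is correct, and it takes a genuinely different route from the paper's. The paper exploits the fact that $\mathcal J_{\hat Y,t}$ is self-adjoint and therefore estimates the operator norm via the variational characterization $\sup_{\|v\|=1}|\mathcal J[v,v]|$: it writes the difference as a single scalar quadratic form, bounds the two surviving terms, and then passes to the norm $\|(Y_1,\lambda_1,\mu_1)-(Y_2,\lambda_2,\mu_2)\|$ and $\|(H,\Delta\lambda,\Delta\mu)\|^2$ via $(a+b+c)\le\sqrt3\,\|(a,b,c)\|$ and $(a+b+c)^2\le 3\|(a,b,c)\|^2$, which is what inflates the constant to $12\sqrt3$. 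You instead treat $\mathcal J_{\hat Y,t}$ as a linear operator, decompose the difference block by block, bound the Frobenius norm of each surviving output component, and only then pass to the operator norm by squaring and summing with Cauchy--Schwarz. This requires more bookkeeping (two output blocks instead of one scalar) but buys two things: it does not depend on self-adjointness of $\mathcal J_{\hat Y,t}$, and it yields the sharper constant $2\sqrt2$ in place of $12\sqrt3$. Both arguments rely on the same two elementary facts ($\|\mathcal A_t^*(v)\|_F\le\|\mathcal A_t\|\|v\|$ and $\|AB\|_F\le\|A\|_F\|B\|_F$), so the content is the same -- the difference is whether one aggregates into a scalar quadratic form first or keeps vector-valued outputs until the final step. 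Since the lemma only asserts the weaker bound $12\sqrt3\,\|\mathcal A_t\|$, your tighter estimate certainly suffices.
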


\begin{proof}
It follows from~\eqref{eq: J as bilinear form} that as a bilinear form one has
\begin{align*}
 &(\mathcal J_{\hat Y,t}(Y_1,\lambda_1,\mu_1) - \mathcal J_{\hat Y,t}(Y_2,\lambda_2,\mu_2 ))[(H,\Delta \lambda,\Delta \mu),(H,\Delta \lambda,\Delta \mu)] \\
{}={} & \tr(H^T \mathcal A^*_t(\lambda_2 - \lambda_1) H) - 2(\Delta \lambda)^T \mathcal A_t((Y_1 - Y_2) H^T + H(Y_1 - Y_2)^T) \\ 
{}\le{} & \| \mathcal A_t \| \| \lambda_1 - \lambda_2 \| \| H \|_F^2 + 4 \| \mathcal A_t \| \| Y_1 - Y_2 \|_F \| H \|_F \| \Delta \lambda \| \\ 
{}\le{}& (\| \mathcal A_t \| \| \lambda_1 - \lambda_2 \| + 4 \| \mathcal A_t \| \| Y_1 - Y_2 \|_F)(\| H \|_F + \| \Delta \lambda \| + \| \Delta \mu \|_F)^2 \\
{} \le{} & 4 \| \mathcal A_t \|(\| Y_1 - Y_2\|_F + \| \lambda_1 -  \lambda_2 \|+\| \mu_1 -  \mu_2 \|)(\| H \|_F + \| \Delta \lambda \| + \| \Delta \mu \|_F)^2 \\
{}\le{} &12 \sqrt3 \| \mathcal A_t \| \| (Y_1, \lambda_1, \mu_1)  - (Y_2,\lambda_2,\mu_2 )\| \| (H,\Delta \lambda, \Delta \mu) \|^2.
\end{align*}
This proves the claim.  
\end{proof}

Since $t \mapsto \mathcal A_t$ is assumed to be continuous, the constant $M = \max_{t \in [0,T] } 12 \sqrt3 \| \mathcal A_t \|$ satisfies the uniform Lipschitz condition
\begin{equation}\label{eq: Lipschitz continuity}
\| \mathcal J_{\hat Y,t}(Y_1,\lambda_1,\mu_1) - \mathcal J_{\hat Y,t}(Y_2,\lambda_2,\mu_2) \|_{op} \le M \| (Y_1, \lambda_1, \mu_1)  - (Y_2,\lambda_2,\mu_2 ) \|
\end{equation}
for all $(Y_1,\lambda_1,\mu_1)$ and $(Y_2,\lambda_2,\mu_2)$, independent of the choice of $\hat Y \in \mathbb R^{n \times r}$. In what follows, we proceed with using~\eqref{eq: Lipschitz continuity} and~\eqref{eq: bounded inverse}, without further investigating the sharpest possible bounds.

In addition, let $\lambda_r(X_t) \ge \lambda_*> 0$ be a uniform lower bound on the smallest positive eigenvalue as in~\eqref{eq: small_eig_bound}. Furthermore, we now also assume a uniform upper bound
\[
\| Y_t \|_2 = \sqrt{ \lambda_1(X_t)} \le \sqrt{\Lambda_*}.
\]
on the spectral norm of $Y_t$. Finally, let $\| \dot X_t \|_F \le L$ as in~\eqref{eq: X_lipschitz} and since the curve $t \mapsto \lambda_t$ is smooth, the constant
\begin{equation}
    \label{eq: dual_lipschitz}
    K := \max_{t \in [0,T] } \| \dot \lambda_t \|
\end{equation}
is also well-defined.

With the necessary constants at hand, we are now in the position to state our main result on the error analysis. The following theorem shows that we can bound the distance between the iterates of Algorithm~\ref{alg: 1} and the set of solutions to~\eqref{eq: BM} provided the initial point is close enough to the set of initial solutions and the step size $\Delta t$ is small enough. Here we employ again the natural distance measure $\min_{Q \in \mathcal O_r} \| \hat Y - Y Q \|_F$ between the orbits $\hat Y \mathcal O_r$ and $ Y\mathcal O_r$, cf.~Remark~\ref{rem: polar_decomposition}.

\begin{theorem}
Let $\delta >0$ and $\Delta t > 0$ be small enough such that the following three conditions are satisfied:
\begin{gather}
\label{eq: condition 1}
(2 \sqrt{\Lambda_*} + \delta)\delta + L \Delta t < \frac{2 \lambda_*}{\sqrt{r+4} + \sqrt{r}},
\\
\label{eq: condition 2}
\delta < \frac{2}{3}\frac{m}{M},
\\
\label{eq: condition 3}
\left[ \frac{1}{\lambda_*} ((2 \sqrt{\Lambda_*} + \delta) \delta + L \Delta t)^2 + \sqrt{r}(2 \sqrt{\Lambda_*} + \delta) \delta + L \Delta t\right]^2 +
(\delta + K \Delta t)^2 \le\frac{2}{3} \frac{m}{M} \delta.
\end{gather}
Assume for the initial point $(\hat Y_0,\hat \lambda_0)$ that
\begin{equation}
\label{eq: initial_error}
\min_{Q \in \mathcal O_r} \| (\hat Y_0,\hat \lambda_0) - (Y_0 Q,  \lambda_0) \| \le \delta.
\end{equation}
Then Algorithm~\ref{alg: 1} is well-defined and for all $t_{k+1} = t_k + \Delta t$ the iterates satisfy
\[
\min_{Q \in \mathcal O_r}\| (\hat Y_k , \hat \lambda_k)- (Y_{t_k}Q, \lambda_{t_k}) \| \le \delta. 
\]
It then holds that
\[
\| \hat X_k - X_{t_k} \|_F \le (2 \sqrt{\Lambda_*} + \delta) \delta
\]
for all $t_k$.
\end{theorem}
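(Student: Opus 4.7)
The proof proceeds by induction on $k$. The base case $k=0$ is immediate from the initial error hypothesis~\eqref{eq: initial_error}. For the inductive step, suppose $\min_Q \|(\hat Y_k, \hat \lambda_k) - (Y_{t_k} Q, \lambda_{t_k})\| \le \delta$, pick $Q_k$ attaining this minimum, and write $\bar Y_k := Y_{t_k} Q_k$. Expanding $\hat X_k - X_{t_k} = \hat Y_k (\hat Y_k - \bar Y_k)^T + (\hat Y_k - \bar Y_k)\bar Y_k^T$ and combining $\|\bar Y_k\|_2 \le \sqrt{\Lambda_*}$ with $\|\hat Y_k\|_2 \le \sqrt{\Lambda_*}+\delta$ yields $\|\hat X_k - X_{t_k}\|_F \le (2\sqrt{\Lambda_*}+\delta)\delta$, which is exactly the final claim of the theorem at index $k$.

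Next I would locate a representative of the exact solution at time $t_{k+1}$ inside the horizontal space at $\hat Y_k$. Combining the previous estimate with $\|X_{t_{k+1}} - X_{t_k}\|_F \le L\Delta t$ gives $\|X_{t_{k+1}} - \hat X_k\|_F \le (2\sqrt{\Lambda_*}+\delta)\delta + L\Delta t$, which by~\eqref{eq: condition 1} is below the radius of Proposition~\ref{prop: inverse_inject_radius}. Hence there is a unique $H_{k+1} \in \mathcal H_{\hat Y_k}$ with $\|H_{k+1}\|_F < \sigma_r(\hat Y_k)$ such that $\tilde Y_{k+1} := \hat Y_k + H_{k+1}$ factorizes $X_{t_{k+1}}$, so in particular $\tilde Y_{k+1} \in Y_{t_{k+1}}\mathcal O_r$. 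Since $Z_{t_{k+1}} X_{t_{k+1}} = 0$ forces $Z_{t_{k+1}} \tilde Y_{k+1} = 0$, and the horizontal constraint holds by construction, the triple $(\tilde Y_{k+1}, \lambda_{t_{k+1}}, 0)$ is a zero of $\mathcal F_{\hat Y_k, t_{k+1}}$.

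The remainder is a standard Newton-type analysis for $\mathcal F_{\hat Y_k, t_{k+1}} = 0$. The algorithm is exactly the Newton step at $(\hat Y_k, \hat \lambda_k, 0)$, so it suffices to show the usual quadratic contraction towards the target $(\tilde Y_{k+1}, \lambda_{t_{k+1}}, 0)$. Invertibility of $\mathcal J_{\hat Y_k, t_{k+1}}(\hat Y_k, \hat \lambda_k, 0)$ follows from~\eqref{eq: bounded inverse} applied at $(\tilde Y_{k+1}, \lambda_{t_{k+1}})$ and the Lipschitz estimate~\eqref{eq: Lipschitz continuity} via a Neumann-series perturbation; condition~\eqref{eq: condition 2} is precisely what is needed to keep the perturbed inverse bounded (by $2/m$). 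The resulting contraction reads
\[
\|(\hat Y_{k+1},\hat \lambda_{k+1}, \Delta\mu) - (\tilde Y_{k+1}, \lambda_{t_{k+1}}, 0)\| \le \frac{M}{m}\bigl(\|H_{k+1}\|_F^2 + \|\hat \lambda_k - \lambda_{t_{k+1}}\|^2\bigr),
\]
and estimate~\eqref{eq: inverse estimate} together with $\|\hat \lambda_k - \lambda_{t_{k+1}}\| \le \delta + K\Delta t$ bounds the right side by the left-hand side of condition~\eqref{eq: condition 3}. That condition then yields a Newton residual of at most $\tfrac{2}{3}\delta$, which dominates $\min_Q \|(\hat Y_{k+1},\hat \lambda_{k+1}) - (Y_{t_{k+1}}Q,\lambda_{t_{k+1}})\|$ and closes the induction.

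The hard part of the argument lies in making the constants consistent with the precise form of the three conditions. In particular, the $1/\lambda_*$ factor in~\eqref{eq: condition 3} is sharper than the $1/\sigma_r(\hat Y_k)^2$ factor that~\eqref{eq: inverse estimate} yields directly; closing this gap requires a Weyl-type lower bound on $\lambda_r(\hat X_k) = \sigma_r(\hat Y_k)^2$ in terms of $\lambda_*$, making use of the already-established bound $\|\hat X_k - X_{t_k}\|_F \le (2\sqrt{\Lambda_*}+\delta)\delta$ and condition~\eqref{eq: condition 1}. Similarly, the factor $\tfrac{2}{3}$ in~\eqref{eq: condition 3} encodes precisely the margin absorbed by the Neumann perturbation of the Jacobian inverse and must be tracked carefully through the Newton analysis.
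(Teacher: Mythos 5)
Your proposal is correct and follows essentially the same route as the paper's proof: induction on $k$ via the bound $\|\hat X_k - X_{t_k}\|_F \le (2\sqrt{\Lambda_*}+\delta)\delta$, using Proposition~\ref{prop: inverse_inject_radius} (triggered by condition~\eqref{eq: condition 1}) to locate an exact factorization $\tilde Y_{k+1}$ of $X_{t_{k+1}}$ in $\mathcal H_{\hat Y_k}$, observing that $(\tilde Y_{k+1},\lambda_{t_{k+1}},0)$ is a zero of $\mathcal F_{\hat Y_k,t_{k+1}}$, and then invoking the quadratic contraction of a single Newton step (the paper cites Theorem~1.2.5 of Nesterov directly rather than redoing the Neumann-series argument, which gives the exact factor $\tfrac{3}{2}\tfrac{M}{m}$ making $\varepsilon^2=\tfrac{2}{3}\tfrac{m}{M}\delta$ yield precisely $\delta$ rather than your slightly slack $\tfrac{2}{3}\delta$). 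Your concern about $1/\lambda_*$ versus $1/\sigma_r(\hat Y_k)^2$ is legitimate and in fact the paper's own proof does not close this gap either: it substitutes $\lambda_*$ for $\sigma_r(\hat Y_t)^2$ in applying~\eqref{eq: inverse estimate} without the Weyl-type step you describe (and, relatedly, applies~\eqref{eq: inverse estimate}, which bounds $\|H\|_F^2$, as if it bounded $\|H\|_F$), so you have identified a genuine imprecision in the published argument rather than a defect in your own.
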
 
Notice that the left side of~\eqref{eq: condition 3} is $O(\delta^2 + \Delta t^2)$ for $\delta, \Delta t \to 0$, whereas the right side is only $O(\delta)$. Therefore for $\delta$ and $\Delta t$ small enough,~\eqref{eq: condition 3} will be satisfied. Furthermore, a sufficient condition for~\eqref{eq: initial_error} to hold is that \[\|\hat \lambda_0 - \lambda_0 \| \le \frac{\delta}{\sqrt{2}}\] and \[\| \hat X_0 - X_0 \|_F \le \frac{\sqrt{r\lambda_*^2+2\sqrt{2}\delta\lambda_*}-\sqrt{r\lambda_*^2}}{2},\] which easily follows from~\eqref{eq: inverse estimate}. 

\begin{proof}
We will investigate one step of the algorithm and apply an induction hypothesis that at time point $t = t_k$ there exists $(Y_t,\lambda_t) \in \mathcal C$ satisfying
\[
\| (\hat Y_t ,\hat \lambda_t ) - (Y_t,\lambda_t) \|_F 
\le \delta.
\]
We aim to show that for sufficiently small $\delta > 0$ and $\Delta t >0$ the next iterate $(\hat Y_{t+\Delta t},\hat \lambda_{t+\Delta t})$ in the algorithm is well-defined and satisfies the same estimate
\[
\| (\hat Y_{t+\Delta t} ,\hat \lambda_{t+\Delta t}) - (Y_{t+\Delta t}, \lambda_{t+\Delta t}) \|_F \le \delta
\]
with an exact solution $(Y_{t+\Delta t}, \lambda_{t+\Delta t}) \in \mathcal C$. The proof of the theorem then follows by induction over the steps in the algorithm.

We first claim that there exists an exact solution $Y_{t + \Delta t}$ in the horizontal space of $\hat Y_t$, that is, $X^{}_{t+\Delta t} = Y^{}_{t + \Delta t} Y_{t+\Delta t}^T$ and $h_{\hat Y_t}(Y_{t+\Delta t}) = 0$. Indeed, using~\eqref{eq: condition 1} we have
\begin{align*}
\| \hat X_t - X_t \|_F &= \| (\hat Y_t - Y_t) \hat Y^T_t + Y_t(\hat Y_t - Y_t)^T  \|_F \\ &\le (\| Y_t \|_2 +  \| \hat Y_t \|_2) \delta 
\le (2 \sqrt{\Lambda_*} + \delta) \delta < \frac{2 \lambda_*}{\sqrt{r+4} + \sqrt{r}} - L \Delta t.
\end{align*}

This yields
\[
\| \hat X_t - X_{t+\Delta t} \|_F \le \| \hat X_t - X_t \|_F + \| X_t - X_{t+\Delta t} \|_F < \frac{2 \lambda_*}{\sqrt{r+4} + \sqrt{r}}.
\]
Thus, Proposition~\ref{prop: inverse_inject_radius} states the existence of $Y_{t+\Delta t}$ as desired. We note for later use that by~\eqref{eq: inverse estimate} it satisfies
\begin{align}
\| \hat Y_t - Y_{t+\Delta t} \|_F &\le \frac{1}{\lambda_*} \| \hat X_t - X_{t+\Delta t} \|_F^2 + \sqrt{r}\| \hat X_t - X_{t+\Delta t} \|_F \notag \\
&\le 
\frac{1}{\lambda_*} (\| \hat X_t - X_{t} \|_F + L \Delta t)^2 + \sqrt{r}\| \hat X_t - X_{t} \|_F + L \Delta t \label{eq: est} \\
&\le \frac{1}{\lambda_*} ((2 \sqrt{\Lambda_*} + \delta) \delta + L \Delta t)^2 + \sqrt{r}(2 \sqrt{\Lambda_*} + \delta) \delta + L \Delta t. \notag
\end{align}

The matrix $Y_{t+\Delta t}$ is an exact solution of  (BM$_{t+\Delta t}$), and by Theorem~\ref{thm: SOSC} there is a unique Lagrange multiplier $\lambda_{t+\Delta t}$ such that $\mathcal F_{\hat Y_t,t+\Delta t}(Y_{t+\Delta t}, \lambda_{t+\Delta t}, 0) = 0$. 
By construction, the next iterate $(\hat Y_{t+\Delta t}, \hat \lambda_{t + \Delta t}, \hat \mu_{t+\Delta t})$ in the algorithm is obtained from one step of the Newton method for solving this equation with starting point $(\hat Y_t,\hat \lambda_t, 0)$. 
In light of~\eqref{eq: bounded inverse} and~\eqref{eq: Lipschitz continuity}, standard results (e.g. Theorem 1.2.5 in \cite{Nesterov2018}) on the Newton method yield that under the condition 
\begin{equation*}
\label{eq: condition epsilon}
    \|(\hat Y_t,\hat \lambda_t,0) - (Y_{t+ \Delta t},\lambda_{t+\Delta t},0) \|_F \le \varepsilon < \frac{2}{3}\frac{m}{M}
\end{equation*}
one step of the method is well-defined, i.e., $\mathcal J_{\hat Y_t,t+\Delta t}(\hat Y_t, \hat \lambda_t,0)$ is invertible, and satisfies
\[
    \| (\hat Y_{t+\Delta t}, \hat \lambda_{t + \Delta t}, \hat \mu_{t+\Delta t}) - (Y_{t+ \Delta t},\lambda_{t+\Delta t},0) \|_F \le \frac{3}{2}\frac{M}{m} \| (\hat Y_t,\hat \lambda_t,0) - (Y_{t+ \Delta t},\lambda_{t+\Delta t},0) \|_F^2.
\]
In particular, using $\varepsilon=\left( \frac{2}{3} \frac{m}{M} \delta \right)^{1/2}$ would give the desired result
\begin{equation*}
\label{eq: Newton estimate}
    \| (\hat Y_{t+\Delta t}, \hat \lambda_{t + \Delta t}) - (Y_{t+ \Delta t},\lambda_{t+\Delta t}) \|_F \le \frac{3}{2}\frac{M}{m} \varepsilon^2=\delta.
\end{equation*} 
Therefore, we need to ensure that
\begin{equation*} 
    \| (\hat Y_t,\hat \lambda_t) - (Y_{t+ \Delta t},\lambda_{t+\Delta t}) \|_F \le  \left( \frac{2}{3} \frac{m}{M} \delta \right)^{1/2} < \frac{2}{3}\frac{m}{M}
\end{equation*}
is satisfied. Here the second inequality is just condition~\eqref{eq: condition 2}.
We now show that~\eqref{eq: condition 3} is a sufficient condition for the first inequality. 
Clearly, using~\eqref{eq: dual_lipschitz}, we have
\[
    \| \hat \lambda_t - \lambda_{t+\Delta t} \|^2 \le (\| \hat \lambda_t - \lambda_t \| + K \Delta t)^2 \le (\delta + K \Delta t)^2.
\]
Together with~\eqref{eq: est} this gives
\[
    \| \hat Y_t - Y_{t+\Delta t} \|_F^2 + \| \hat \lambda_t - \lambda_{t+\Delta t} \|^2 \\\le \left[ \frac{1}{\lambda_*} [(2 \sqrt{\Lambda_*} \! + \! \delta) \delta \! + \! L \Delta t]^2\! + \!\sqrt{r}(2 \sqrt{\Lambda_*} \! + \! \delta) \delta \! + \! L \Delta t\right]^2\hspace{-0.5em}+ (\delta + K \Delta t)^2.
\]
Now~\eqref{eq: condition 3} ensures the desired estimate for the right-hand side and the proof is completed.
\end{proof}

\section{Numerical experiments on Time-Varying Max Cut}
\label{sec: num_exp}

In this section, we compare the tracking of the trajectory of solutions to TV-SDP via Algorithm~\ref{alg: 1} with interior-point methods (IPMs) used to track the same trajectory by solving the problem at discrete time points. In our experiments, we used the implementation of the homogeneous and self-dual algorithm~\cite{Andersen2003, Freund2006} from the MOSEK Optimization Suite, version~9.3~\cite{MOSEK}.  
Furthermore, in order to provide a comparison with an alternative warm-start approach, we performed numerical experiments using the Splitting Conic Solver (SCS), version 3.2.2 \cite{scs}. This package implements the first-order method presented in  \cite{Donoghue2021,Donoghue2016}, which uses an operator splitting method, the alternating directions method of multipliers, to solve the homogeneous self-dual embedding.
We show the algorithm proposed in this paper can perform better, in terms of both accuracy and runtime, than repeated runs of IPM for time-invariant SDP and than the warm-started SCS.

Given a weighted graph $\mathcal{G}=(V,E)$, the Max-Cut problem is a well-known problem in graph theory. There, we wish to find a binary partition of the vertices in $V$ (also known as a cut) of maximal weight. The weight of the cut is defined as the sum of the weights of the edges in $E$ connecting the two subsets of the partition. This problem can be formulated as the following quadratically-constrained quadratic problem
\begin{equation}
\label{eq: MC}
    \tag{MC}
    \begin{aligned}
        &\max_{x\in\mathbb{R}^n} && \sum_{i,j=1}^n w_{i,j}(1-x_ix_j)\\
        &\text{\ \ s.t.}&&x_i^2=1\quad\text{for all } i\in \{1,\dots,n\},
    \end{aligned}
\end{equation} 
where $n=|V|$ is the number of vertices of the graph, $w_{i,j}$ is the weight of the edge connecting vertices $i$ and $j$, and variable $x_i\in\{1,-1\}$ takes binary values according to the subset to which vertex $i$ is assigned. 
This problem can be relaxed to an SDP of the form
\begin{equation}
\label{eq: MCR}
    \tag{MCR}
    \begin{aligned}
        &\min_{X\in\mathbb{S}^n} && \langle W, X\rangle\\
        &\text{\ \ s.t.}&&X_{i,i}=1\quad\text{for all } i\in \{1,\dots,n\}\\
        &&&X\succeq0,
    \end{aligned}
\end{equation}
where $W$ is the weights matrix whose entry $(i,j)$ is given by $w_{i,j}$, see \cite{Goemans1995}. Note that the number of constraints is equal to the size of the variable matrix.
Randomized approximation algorithms for~\eqref{eq: MC} exploiting the convex relaxation~\eqref{eq: MCR} deliver solutions with a performance ratio of $0.87$ and are known to be the best poly-time algorithms to approximately solve~\eqref{eq: MC}.

In this paper, we adopt a time-varying version of~\eqref{eq: MCR} as a benchmark, where the data matrix $W$ depends on a time parameter $t\in[0,1]\mapsto W_t\in\mathbb{S}^n$. (We point out that this differs from the recently studied variant \cite{Kao2017,henzinger2022practical} with edge insertions and deletions, which could be seen as discontinuous functions of time.)  

In our experiment, $W_t$ is obtained as a random linear perturbation of a sparse weight matrix with density $50\%$. Specifically,  
\[
W_t=W_0+tW_1,
\]
where the entries of $W_0$ are randomly generated with a normal distribution having mean and standard deviation $\mu,\sigma=10$, while the entries in $W_1$ are chosen with a normal distribution having $\mu,\sigma=1$. Both matrices have the same sparsity structure. We refer to such a problem as the \textit{time-varying max-cut relaxation} (TV-MCR), which can be thought of as a convex relaxation for a max-cut problem where the  edges weights of a given graph change over time.

All the experiments were conducted on a personal computer with a 1,6 GHz Intel Core i5 dual-core processor with 16GB RAM, using a Python implementation of our path-following algorithm. The main goal was to illustrate the potential computational benefits of our algorithm, so we did not attempt to provide the most efficient implementation. The code\footnote{\texttt{https://github.com/antoniobellon/burer-monteiro-path-following}, Eclipse Public License 2.0.} as well as the data and experimental results\footnote{\texttt{https://zenodo.org/record/7769225}} are available online. 

We performed experiments on $110$ instances of the TV-MCR problem with $n=100$ vertices and tracked the trajectory of solutions for $t\in[0,1]$. Among these samples, we included 10 instances of TV-MCR for which the rank of the solution is not constant, hence violating our assumption~\ref{ass: strict_compl}. This was done by sampling the rank (estimated with a tolerance on zero eigenvalues of~$10^{-7}$) of the solutions obtained using MOSEK over a 10-steps subdivision of the interval $[0,1]$ and selecting ten cases in which we observed a change in the rank. Using the same procedure, we checked that for the remaining 100 instances, the rank of the solution is constant along the trajectory.

First, we applied Algorithm~\ref{alg: 1} without step size adjustment, hence setting \texttt{step size\_TUNING} to \texttt{FALSE}, and using step sizes $\Delta t=0.1,0.01,0.001$, so that in each experiment 10, 100, and 1000 iterations are performed for each choice of the step size (see Figures~\ref{fig:residuals} and~\ref{fig:runtimes}).
The factor dimension~$r$ is chosen equal to the rank of an initial solution obtained using MOSEK with relative gap termination tolerances set to $10^{-14}$. Its distribution is shown in Table \ref{tab: 1}.  

\begin{table}[H]
    \centering 
    \begin{tabular}{lcccc}
        \hline
         $r$&4&5&6&7
         \\
         \hline
         \# occurences&2&39&53&6\\
         \hline
    \end{tabular}
    \caption{Distribution  of the rank over $100$ instances of the TV-MCR with $n=100$ with constant rank solution trajectory.}
    \label{tab: 1}
\end{table} 
\begin{figure}
    \centering
    \includegraphics[width=\textwidth]{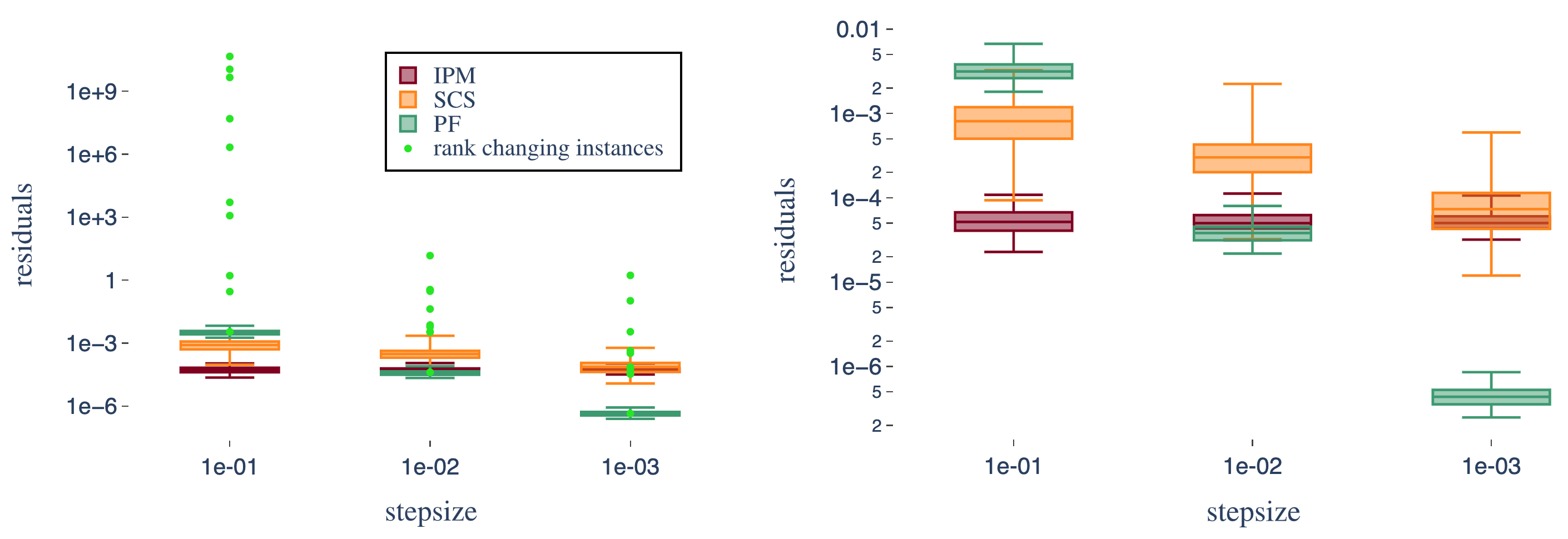}
    \caption{Distribution of the average residuals as a function of the step size using three different methods: an interior point method (IPM), in bordeaux, the splitting conic solver (SCS), in orange, and our path following (PF) algorithm, in green. The data in both plots are the same except that the left plot also shows ten rank changing instances, depicted by light green dots, which were removed in the right plot.}
\label{fig:residuals}  
\end{figure}  

 Figure~\ref{fig:residuals} depicts the distribution over 100 instances of the average residuals along the tracking of the solution on the time interval $[0,1]$, as a function of the used step size. 
For each whisker plot, the error bars span the interval from the minimum to the maximum, while the box spans the first quartile to the third quartile, with a horizontal line at the median. 

In the left plot, the light green dots correspond to the average residuals of the 10 rank-changing instances; instead, the right plot excludes these degenerate instances form the data set.
Notice that these points correspond to TV-SDP instances that do not satisfy our assumption~\ref{ass: strict_compl}.
The green plot shows the average residual obtained by tracking the solution with Algorithm~\ref{alg: 1}, the orange plot shows the average residual when the tracking is done using SCS with relative and absolute feasibility tolerances set to $10^{-7}$, warm-started with the current solution; finally, the bordeaux color plot shows the average residual when the tracking is done using MOSEK IPM \cite{MOSEK} with the relative gap termination tolerances set to $10^{-15}$.

The residual of an SDP primal-dual solution $(X,Z(\lambda))$ is defined, in analogy to~\eqref{residual}, as
\[
    \operatorname{res}_{t}(X,\lambda):=
    \left \|
    \begin{array}{c}
        2[C_{t}  - \mathcal A^*_{t}(\lambda)] X
        \\ 
        \mathcal A_{t}(X) - b_{t}
    \end{array}
    \right\|_{\infty}.
\] 
By choosing a suitable step size (in our experiments order $10^{-2}$), Algorithm~\ref{alg: 1} yields an average residual accuracy that is comparable to the one obtained using standard IPMs with very small relative gap termination tolerance. For a step size of order $10^{-3}$, our algorithm exhibits a residual precision that is 100 times more accurate than both IPM and warm-started SCS. Furthermore, as we see next, this accuracy is reached much faster with our approach.

In Figure~\ref{fig:runtimes} we plot the distributions of the runtimes of Algorithm~\ref{alg: 1} (green) as a function of the step size, as well as the distributions of the runtimes of IPM (bordeaux) used with relative gap termination tolerances $10^{-15}$ and of the warm-started SCS (orange) to track the solutions trajectory at a constant step size resolution.

Remarkably, for each step size that we tested, the mean runtime of Algorithm~\ref{alg: 1} is on average about ten times smaller then both SCS and MOSEK IPM, indicating competitive computational performances of our algorithm.

\bigskip
\begin{figure}[H]
    \centering  \includegraphics[height=0.3\textwidth]{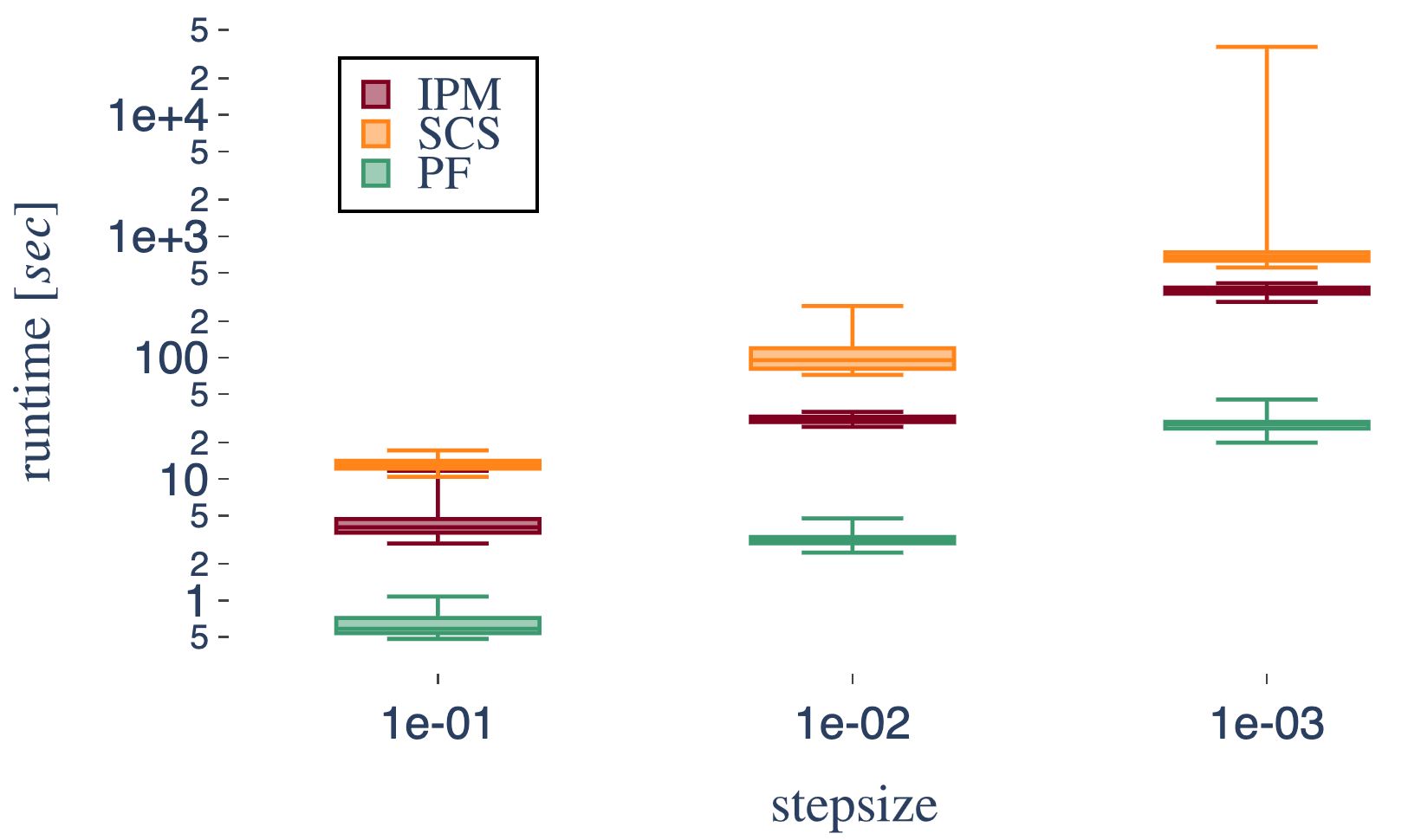}
    \caption{Distribution of the runtime as function of the step size.}
    \label{fig:runtimes}
\end{figure}

\begin{figure}[t] 
\begin{subfigure}{0.49\textwidth}
\centering
\includegraphics[width = \textwidth]{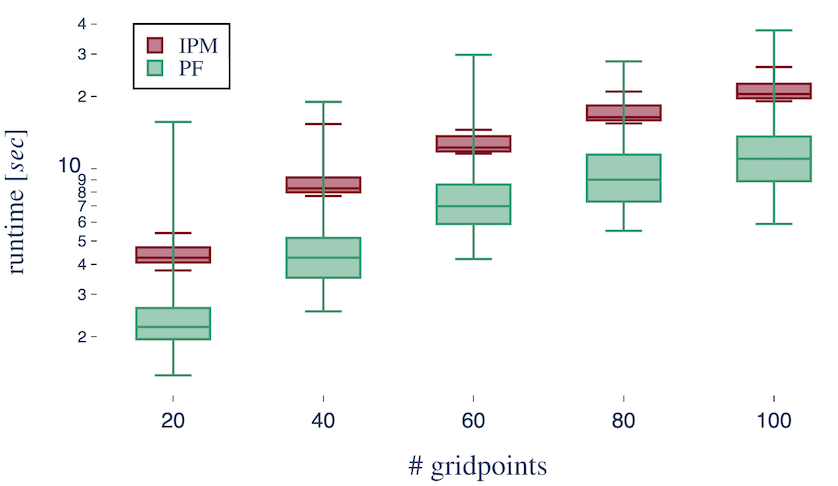}
\caption{Relative gap termination tolerance $= 10^{-9}$}
\label{fig:left}
\end{subfigure}
\begin{subfigure}{0.49\textwidth} 
\includegraphics[width = \textwidth]{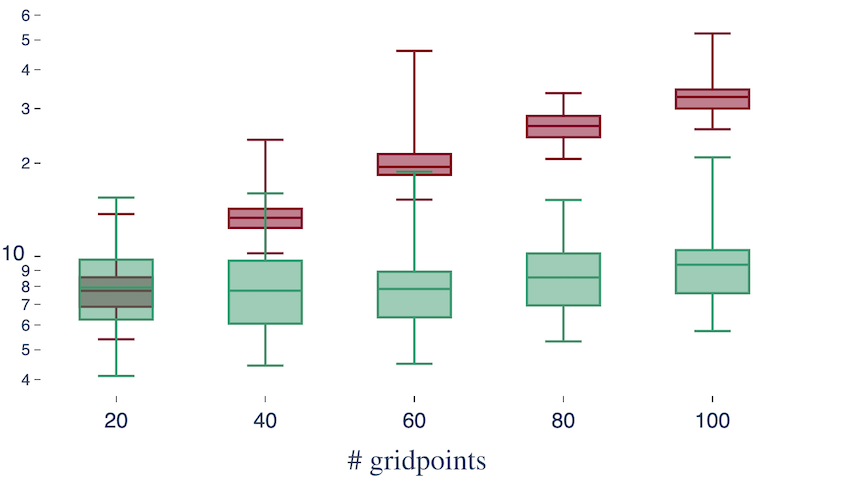}
\caption{Relative gap termination tolerance $= 10^{-15}$}
\label{fig:right}
\end{subfigure}
\caption{Average runtime of MOSEK IPM and Algorithm~\ref{alg: 1} for tracking the TV-SDP solutions with the same residual accuracy on a grid, as a function of the number of gridpoints.}
\label{fig:grid}
\vspace*{-0.2cm}
\end{figure}

Finally, we apply Algorithm~\ref{alg: 1} to the same set of TV-MCR problems allowing for a step size adjustment (setting \texttt{step size\_TUNING} to \texttt{TRUE}).
In order to provide a fair comparison with MOSEK IPM, we fixed five subdivisions of the interval $[0,1]$ in a grid of, respectively, 20, 40, 60, 80, and 100 equidistant points. 
For each grid, at each time point, we used MOSEK with a relative gap termination tolerance of $10^{-14}$ to obtain the corresponding TV-SDP solution, recording the runtime and the average residual over the tracking of each instance. 
For each grid, we then run our algorithm with step size adjustment in order to ensure the same average residual accuracy guaranteed by MOSEK, additionally enforcing the path-following procedure to hit the grid points. 
In this way, we ensure that our procedure has the same accuracy of MOSEK both in terms of the solution residual and of the tracking resolution.

Figure~\ref{fig:grid} shows the distributions of the runtimes as a function of the number of grid points of both Algorithm~\ref{alg: 1} (green) and IPM with two different relative gap termination tolerances: $10^{-9}$ (Figure \ref{fig:grid}(a)) and $10^{-15}$ (Figure \ref{fig:grid}(b)). 

Encouragingly, we observe that we can ensure both the same accuracy and tracking resolution of MOSEK at a smaller average runtime. The constant behavior of the green plot on the right is due to the fact that, in order to ensure the same residual accuracy of the IPM, the path-following procedure needs to consider a number of points that are quite denser then the number of grid points, and hence independent from this latter, while for the plot on the left it is instead sufficient for Algorithm \ref{alg: 1} to follow the grid.

\section{Conclusion}
\label{sec: conclusion}

In this paper, we proposed an algorithm for solving time-varying SDPs based on a path-following predictor-corrector scheme for the Burer--Monteiro factorization.
The restriction to a horizontal space ensures that the linearized KKT conditions system is uniquely solvable under standard regularity assumptions on the TV-SDP problem, thus leading to a well-defined path-following procedure with rigorous error bounds on the distance from the optimal trajectory.
Preliminary numerical experiments on a time-varying version of the max-cut SDP relaxation suggest that our algorithm is competitive both in terms of runtime and accuracy when compared to the application of standard IPMs. Future work should explore the applicability and relative merits of our approach in further applications. 

So far we have assumed that the rank $r$ of the true solution curve is known and remains constant. While this is certainly appropriate for a rigorous analysis as conducted in this work, it might be restrictive in practice. An important extension hence would be to develop rank-adaptive versions of our path-following approach that are able to detect and adjust the appropriate rank in a Burer--Monteiro factorization, for example, by monitoring the smallest singular values of the~matrices $Y_t$.
 
Another important aspect is the initialization of the method, which requires an accurate SDP solution and is currently not based on Burer--Monteiro factorization, thus undermining the computational efficiency of the whole approach. The obvious way out is to also solve the initial time problem using the factorized approach~\cite{Burer2003}. The metaalgorithm presented in~\cite{Journee2010} even does this in a rank-adaptive way. Although this is a nonconvex problem, several works, including also~\cite{Boumal2015,Rosen2021,cifuentes2019polynomial}, have considered Burer--Monteiro schemes with guaranteed and certifiable convergence to a globally optimal low-rank factor under mild conditions, making this a reliable approach in practice.

\section*{Acknowledgments}
The research leading to these results received funding from the OP RDE under Grant Agreement CZ.02.1.01/0.0/0.0/16\_019/0000765. The first author gratefully acknowledges the support of the Czech Science Foundation (grant 22-15524S). The authors also thank two anonymous referees for their helpful comments.
{
\small
\bibliography{references}
}
\end{document}